\newcommand{\eps}{\varepsilon}
\newcommand \commentout[1] {}
\newcommand{\R}{\mathbb{R}}
\newcommand{\Td}{\mathbb{T}^{d}}
\newcommand {\Chi} {{\bf \raise 2pt \hbox{$\chi$}} }
\newcommand{\ueps}{u_{\varepsilon}}
\newcommand {\f}   {\frac}
\newcommand {\p}   {\partial}
\newcommand*{\dd}{\mathop{\kern0pt\mathrm{d}}\!{}}
\newcommand*{\DD}{\mathop{\kern0pt\mathrm{D}}\!{}}
\DeclareMathOperator*{\supp}{\operatorname{supp}}
\theoremstyle{plain}
\newtheorem{theorem}{Theorem}[section]
\newtheorem{definition}[theorem]{Definition}
\newtheorem{lemma}[theorem]{Lemma}
\newtheorem{proposition}[theorem]{Proposition}
\theoremstyle{remark}
\newtheorem{remark}[theorem]{Remark}
\DeclareMathOperator{\DIV}{div}
\newcommand{\beq}{\begin{equation}}
\newcommand{\eeq}{\end{equation}}
\newcommand{\bea} {\begin{array}{rl}}
\newcommand{\eea} {\end{array}}
\newcommand{\bepa}{\left\{ \begin{array}{l}}
\newcommand{\eepa} {\end{array}\right.}
\newcommand{\diff}{\mathop{}\!\mathrm{d}}
\numberwithin{equation}{section}
\date{}
\begin{document}
\title{Nonlocal-to-local convergence of the Cahn-Hilliard equation with degenerate mobility and the Flory-Huggins potential.}

\author{Charles Elbar\thanks{Sorbonne Universit\'{e}, CNRS, Universit\'{e} de Paris, Inria, Laboratoire Jacques-Louis Lions, F-75005 Paris, France } \thanks{email: charles.elbar@sorbonne-universite.fr}
\and Jakub Skrzeczkowski\thanks{Mathematical Institute, University of Oxford, Woodstock Road, Oxford, OX2 6GG, United Kingdom; Faculty of Mathematics, Informatics and Mechanics, University of Warsaw, Stefana Banacha 2, 02-097 Warsaw, Poland} 
\thanks{email: jakub.skrzeczkowski@maths.ox.ac.uk}
}
\maketitle
\begin{abstract}
The Cahn-Hilliard equation is a fundamental model for phase separation phenomena. Its rigorous derivation from the nonlocal aggregation equation, motivated by the desire to link interacting particle systems and continuous descriptions, has received much attention in recent years. In the recent article, we showed how to treat the case of degenerate mobility for the first time. Here, we discuss how to adapt the exploited tools to the case of the mobility $m(u)=u\,(1-u)$ as in the original works of Giacomin-Lebowitz and Elliott-Garcke. The main additional information is the boundedness of $u$, implied by the form of mobility, which allows handling the nonlinear terms. We also discuss the case of (mildly) singular kernels and a model of cell-cell adhesion with the same mobility. 
\end{abstract}

\noindent{\makebox[1in]\hrulefill}\newline
2010 \textit{Mathematics Subject Classification.}  35B40, 35D30, 35K25, 35K55.
\newline\textit{Keywords and phrases.} degenerate Cahn-Hilliard equation; nonlocal Cahn-Hilliard
equation; aggregation-diffusion; singular limit; cell-cell adhesion.

\section{Introduction}

The question of nonlocal-to-local convergence for the Cahn-Hilliard equation is of fundamental nature, linking the models derived from particle processes or interacting particle systems \cite{MR1453735} and from physical principles \cite{CAHN1961795, Cahn-Hilliard-1958}. This question was recently addressed by several authors \cite{MR4198717, MR4093616,
MR4408204, MR4248454} but only for the case of constant mobility. Our recent article \cite{MR4574535} was the first one to establish rigorously the convergence for the degenerate mobility $m(u) = u$ and smooth potential as a consequence of nonlocal compactness results developed by Bourgain-Brezis-Mironcescu \cite{bourgain2001another} and Ponce \cite{MR2041005}. In the current paper we conclude our analysis by showing that the reasoning from \cite{MR4574535} can be extended to the mobility $m(u) = u\,(1-u)$ and the logarithmic Flory-Huggins potential as in the original works of Giacomin-Lebowitz \cite{MR1453735} and Elliot-Garcke \cite{MR1377481}. We also consider singular nonlocal interaction kernels in the spirit of \cite{MR4198717}. In the last section, using similar techniques, we also discuss the nonlocal-to-local limit of a model of cell-cell adhesion with the same mobility introduced in \cite{MR3948738}.   \\

We consider the nonlocal Cahn-Hilliard equation with degenerate mobility 
\begin{equation}\label{eq:CH_eq_general_without_eps}
\begin{split}
\partial_t u = \DIV(m(u) \nabla \mu)\quad \text{in}\quad &(0,+\infty)\times \Td,\\
\mu = B[u] + F'(u)\quad \text{in}\quad &(0,+\infty)\times \Td, 
\end{split}
\end{equation}
equipped with an initial datum $u^{0}\ge 0$. Here, the mobility is $m(u)=u(1-u)$, $\Td$ is the $d$-dimensional flat torus, which is particularly useful when treating nonlocal operators, $B$ is the nonlocal operator $B=B_{\varepsilon}$ defined with

\begin{equation}\label{operatorB}
B_\eps[u](x) = (J_{\eps}\ast 1)u(x)-J_{\eps}\ast u(x)=\int_{\Td}J_{\eps}(x-y)(u(x)-u(y)) \diff y.
\end{equation}

The convolution kernel $J_{\eps}$ is of the form 

\begin{equation}\label{eq:conv_kernel}
J_{\eps}(x)=\f{\omega_{\eps}(x)}{\eps^{2-\alpha}|x|^{\alpha}}  
\end{equation}
for $0\le\alpha<d-1$, where $\{\omega_\eps\}_\eps$ is the family of standard mollifiers:
\begin{equation}\label{eq:omega}
 \omega_{\eps}(x)=\f{1}{\eps^{d}}\omega\left(\f{x}{\eps}\right)    
\end{equation}
 with $\omega$ being a nonnegative smooth radially symmetric kernel which is compactly supported in the unit ball of $\R^{d}$ satisfying
\begin{equation}
\int_{\Td}\frac{\omega(y)y_{i}y_{j}}{|y|^{\alpha}} \diff y =\f{W}{d}\delta_{i,j},
\label{as:omega}
\end{equation}
for some $W>0$.

\begin{remark}\label{rem:kernel}
The range $\alpha\in[0,d-1)$ ensures that the kernel is in $W^{1,1}_{loc}(\R^{d})$ and therefore we can apply the existing theory of the existence of weak solutions for the nonlocal equation. This type of kernel has for instance been considered in~\cite{MR4248454}. 
\end{remark}

The potential $F$ is the usual double-well logarithmic potential 

\begin{equation}\label{eq:potential_F}
F(s)=s\log(s) + (1-s)\log(1-s) - \theta\left(s-\f{1}{2}\right)^{2}+k,  
\end{equation}
where $\theta>1$ and $k$ is a constant such that $F(s)\ge 0$ for all $s\in(0,1)$. Note in particular the identities
\begin{equation}\label{eq:identities_doublewell}
    F'(s)=\log\left(\f{s}{1-s}\right)+\theta(1-2s),\quad  F''(s)=\f{1}{s(1-s)}-2\theta, \quad m(s)F''(s)=1-2\theta s(1-s). 
\end{equation}

We impose the constraint on the parameter $\theta$
\begin{equation}\label{eq:constraint_theta}
2\,\theta<\inf_{\eps<\eps_{0}}(J_{\eps}\ast 1)
\end{equation} for some $\eps_{0}$. This is always satisfied for $\eps_{0}$ small enough since
\begin{equation*}
J_{\eps}\ast 1= \f{1}{\eps^{2}}\int_{\Td}\f{\omega(y)}{|y|^\alpha}\diff y.
\end{equation*}
As we intend to send $\eps \to 0$, we can assume that this condition is always satisfied. The condition \eqref{eq:constraint_theta} ensures that the coefficient $(J_{\eps}\ast 1) + F''(s)$ (standing next to $\nabla u_{\eps}$ in \eqref{eq:CH_eq_general_without_eps}) is strictly positive and the existence as well as uniqueness of the weak solution follows. \\

Our target is to prove that as $\varepsilon \to 0$, the solutions of 
 \begin{align}
\partial_t u_{\eps} = \DIV(m(u_{\eps}) \nabla \mu_{\eps})\quad \text{in}\quad &(0,+\infty)\times \Td,\label{eq:CHE1}\\
\mu_{\eps} = B_{\eps}[u_{\eps}] + F'(u_{\eps})\quad \text{in}\quad &(0,+\infty)\times \Td \label{eq:CHE2},
\end{align}
tend to the weak solution of the degenerate Cahn-Hilliard equation
 \begin{align}
\partial_t u = \DIV(m(u) \nabla \mu)\quad \text{in}\quad &(0,+\infty)\times \Td,\label{eq:CH1}\\
\mu = -\f{W}{2}\Delta u + F'(u)\quad \text{in}\quad &(0,+\infty)\times \Td \label{eq:CH2}.
\end{align}

Our main result reads as follows.

\begin{theorem}[Convergence of nonlocal to local Cahn-Hilliard equation on the torus]\label{thm:final}
Let $u^{0}\ge 0$ be an initial datum with finite energy and entropy $\sup_{\eps}E_{\eps}(u^0), \sup_{\eps}\Phi_{\eps}(u^0) < \infty$ defined in ~\eqref{eq:intro_energy} and \eqref{eq:intro_entropy}. Let $\{u_{\varepsilon}\}_{\eps}$, with $\eps<\min(\eps_{A},\eps_{B})$ defined in Lemmas~\ref{lem:Poincare_with_average}-\ref{lem:poincare_nonlocal_H1_L2}, be a sequence of solutions of the nonlocal Cahn-Hilliard equation~\eqref{eq:CHE1}-\eqref{eq:CHE2} as in Proposition~\ref{prop:existence_weak_sol}. Then, along a subsequence not relabeled,
$$
u_{\varepsilon} \to u \mbox{ in } L^2(0,T; H^1(\Td))\cap{L^{p}((0,T)\times\Td)}, \, \forall p\in[1,\infty),
$$
where $u$ is a weak solution of the degenerate Cahn-Hilliard equation~\eqref{eq:CH1}-\eqref{eq:CH2} as defined in Definition~\ref{def:weak_sol_limit}.   
\end{theorem}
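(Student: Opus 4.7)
The plan is to follow the strategy of~\cite{MR4574535} and adapt it to the Flory-Huggins potential, with the algebraic identity $m(s)F''(s) = 1 - 2\theta\, s(1-s)$ from~\eqref{eq:identities_doublewell} playing the decisive role of taming the logarithmic singularity of $F'$ at the endpoints $\{0,1\}$. The proof proceeds in three steps: uniform a priori estimates, compactness, and limit passage.

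I would first collect estimates. Testing~\eqref{eq:CHE1}-\eqref{eq:CHE2} with $\mu_\eps$ yields the energy identity, which controls the nonlocal Dirichlet form $\int\!\!\int J_\eps(x-y)(u_\eps(x)-u_\eps(y))^2\diff x \diff y$ together with the dissipation $\int m(u_\eps)|\nabla\mu_\eps|^2$ uniformly in $\eps$. An entropy estimate obtained from testing with a regularisation of $F'(u_\eps)$ bounds $\Phi_\eps(u_\eps)$ uniformly and, since $F$ defined by~\eqref{eq:potential_F} has effective domain $[0,1]$, forces $0 \le u_\eps \le 1$ almost everywhere. This pointwise bound is the main additional information compared to~\cite{MR4574535}. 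Compactness of $\{u_\eps\}$ then follows by combining the Bourgain-Brezis-Mironescu-Ponce framework~\cite{bourgain2001another,MR2041005} on the spatial variable with an Aubin-Lions argument based on a bound of $\partial_t u_\eps$ in $L^2(0,T;H^{-1}(\Td))$ coming from the dissipation and $m(u_\eps)\le 1/4$. This yields $u_\eps \to u$ strongly in $L^2((0,T)\times\Td)$, which extends to every $L^p$ with $p<\infty$ by dominated convergence, while $u \in L^2(0,T;H^1(\Td))$ is provided by Ponce's theorem.

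For the limit passage in the weak formulation I would decompose the flux as
\[
m(u_\eps)\nabla \mu_\eps = m(u_\eps)\nabla B_\eps[u_\eps] + \bigl(1-2\theta\,m(u_\eps)\bigr)\nabla u_\eps,
\]
so that $F'(u_\eps)$ disappears and the remaining prefactor of $\nabla u_\eps$ is uniformly bounded. The local piece passes to the limit using strong $L^p$ convergence of $u_\eps$ together with weak $L^2$ convergence of $\nabla u_\eps$. The nonlocal piece $m(u_\eps)\nabla B_\eps[u_\eps]\cdot\nabla\varphi$ is first rewritten, using the symmetry of $J_\eps$, as a double integral involving $J_\eps(x-y)(u_\eps(x)-u_\eps(y))$, and the second moment condition~\eqref{as:omega} identifies its limit with the local expression corresponding to the term $-\tfrac{W}{2}\Delta u$ in~\eqref{eq:CH2}. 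The required strong convergence in $L^2(0,T;H^1(\Td))$ then follows from matching Ponce's liminf lower bound on the nonlocal energy with the limit of the energy identity.

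The main obstacle is precisely the treatment of the nonlocal flux: the object $\nabla B_\eps[u_\eps]$ has no useful limit on its own, and convergence only becomes available after the symmetrisation, a Taylor expansion involving $\nabla\varphi$ and $m(u_\eps)$, and a quantitative BBM-type estimate exploiting the specific scaling~\eqref{eq:conv_kernel} of $J_\eps$, essentially as in~\cite{MR4574535}. The uniform bound $u_\eps \in [0,1]$ proved in the first step is what makes the nonlinear prefactor $m(u_\eps)$ tractable at this stage and what enables the systematic use of $m F'' = 1 - 2\theta\, m$ throughout the argument.
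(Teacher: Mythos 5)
Your overall architecture (a priori bounds, BBM--Ponce compactness, flux decomposition isolating $mF''=1-2\theta m$, symmetrisation of the nonlocal term) matches the paper's, and your treatment of the local piece $\bigl(1-2\theta\,m(u_\eps)\bigr)\nabla u_\eps$ is exactly the paper's term $I_3$. The genuine gap is in what you extract from the entropy estimate. You use it only to bound $\Phi_\eps(u_\eps)$ and to conclude $0\le u_\eps\le 1$ (a bound that in the paper already comes from the existence theory, Proposition~\ref{prop:existence_weak_sol}). What the proof actually needs from \eqref{eq:entropy} is its dissipation term, namely the uniform bound on
\begin{equation*}
\int_0^T\!\!\int_{\Td}\!\int_{\Td}\frac{\omega_\eps(y)}{\eps^{2-\alpha}|y|^{\alpha}}\,|\nabla u_\eps(x)-\nabla u_\eps(x-y)|^2\diff x\diff y\diff t,
\end{equation*}
i.e.\ item \ref{item_est7} of Lemma~\ref{lem:uniform_est_just_eps} (obtained after absorbing the negative part of $\int F''(u_\eps)|\nabla u_\eps|^2$ via Lemma~\ref{lem:poincare_nonlocal_H1_L2}, cf.\ \eqref{eq:control_negative_part_entropy} --- a point you also skip; since $F$ is not convex this term is not signed). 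Without this bound you cannot (i) conclude $u\in L^2(0,T;H^2(\Td))$, which Definition~\ref{def:weak_sol_limit} requires and without which $\Delta u$ in the weak formulation is meaningless; (ii) obtain strong convergence of $\nabla u_\eps$ in $L^2$ by applying Theorem~\ref{thm:ponce_tx} to $f_\eps=\nabla u_\eps$; or (iii) identify the limit of the nonlocal flux, because the key convergence \eqref{Sconv1} rests on the weak $L^2$ limit $D^2u(x)\,y$ of the gradient difference quotients, which is only available under the hypothesis \eqref{eq:boundedness_gradients}.

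Your proposed substitute for (ii) --- matching Ponce's liminf lower bound against ``the limit of the energy identity'' --- does not close the argument: the energy estimate is only an inequality, so you obtain two one-sided bounds on the nonlocal seminorm of $u_\eps$ rather than convergence of norms; and even if such a Radon--Riesz argument went through it would give strong $H^1$ convergence of $u_\eps$ but neither the $H^2$ regularity of $u$ nor the compactness needed to pass to the limit in the product of $\nabla m(u_\eps)$ (only weakly convergent) with the second-order difference quotient of $u_\eps$. The second-moment/Taylor argument you invoke for the nonlocal piece works when the prefactor is a fixed test function, but here the prefactor is the nonlinear, merely weakly convergent quantity $\nabla m(u_\eps)$, which is precisely why the paper needs the $S_\eps$-calculus of Lemma~\ref{lem:conv_S_collected} together with the entropy dissipation bound on the gradient increments.
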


\begin{remark}
If $u_0 \in H^{1}(\Td)$, then $\sup_{\eps}E_{\eps}(u^0), \sup_{\eps}\Phi_{\eps}(u^0) < \infty$, see~\cite[Theorem 1]{bourgain2001another}. 
\end{remark}

The main contribution of Theorem \ref{thm:final} is the possibility to consider more general, nonlinear mobilities $m(u)$. In \cite{MR4574535}, where we considered the particular mobility $m(u) = u$, we exploited the formula
$$
\nabla m(u) \, \Delta u = \nabla u \, \Delta u = \DIV(\nabla u \otimes \nabla u ) - \frac{1}{2} \nabla |\nabla u|^2
$$
in a crucial way. This is no longer available when $m(u) = u\,(1-u)$. However, the more general term can be still handled because the singular Flory-Huggings potential ensures that the sequence $\{u_{\eps}\}$ is uniformly bounded in $L^{\infty}((0,T)\times\Td)$.\\

We also would like to point out that while the setting above is formulated for the standard choice $m(u) = u\, (1-u)$, the same is also true for more general case $m(u) = u^k\, (1-u)^l$ where $k, l \geq 1$. In particular, the proof of Theorem \ref{thm:final} and of the crucial Lemma \ref{lem:conv_S_collected}, is formulated in so general way that it uses only the fact that $m$ is a $C^1$ function.\\

The structure of the paper is as follows. In Section \ref{sect:literature} we review the existing literature on the problem while in Section \ref{sect:existence} we recall the basic properties of the system \eqref{eq:CHE1}-\eqref{eq:CHE2} (well-posedness, energy, and entropy estimates). Section \ref{sect:convergence} is dedicated to the proof of the main results. In Section \ref{sect:numerics} we present numerical simulations which show certain qualitative differences between the local and nonlocal models. The last section, Section \ref{sect:cell_cell}, is devoted to a model of cell-cell adhesion phenomena from \cite{MR3948738}, which also includes the mobility $m(u) = u\,(1-u)$. For this model we also obtain a nonlocal-to-local convergence result.

\section{Motivations and litterature review}\label{sect:literature}
The non-local Cahn-Hilliard equation was initially derived by Giacomin and Lebowitz \cite{MR1453735,MR1638739} through a microscopic approach. Their model is based on a $d$-dimensional lattice gas that evolves through Kawasaki exchange dynamics, which is a Poisson process that exchanges nearest neighbors. In the hydrodynamic limit, the average of the occupation numbers over a small macroscopic volume element tends towards a solution of a non-local Cahn-Hilliard equation. This equation is an approximation of the local Cahn-Hilliard equation, as demonstrated in Theorem~\ref{thm:final}. The literature concerning the nonlocal Cahn-Hilliard equation is quite well-developed and we refer to \cite{MR3688414,MR3072989,MR1612250,KNOPF2021236,MR4365199,MR4221297,MR4562643,2023arXiv230307745P,MR3019479,MR3090070} and \cite{MR4241616,MR3346161,MR2746427,MR2009615,MR2784354,MR4227048,MR3518604,MR3903266} and the references therein for the cases of non-degenerate and degenerate mobilities respectively. For the local case, we refer for instance to~\cite{MR4001523,MR1377481,MR4199231,MR4432003}.\\

The convergence of the nonlocal Cahn-Hilliard equation to its local counterpart is a relatively new topic of research. The first result in this area was achieved in 2019 by Melchiona, Ranetbauer, Scarpa, and Trussardi on the torus with constant mobilities~\cite{MR4408204}. The study was subsequently extended to different types of kernels, potentials, and Neumann boundary conditions in~\cite{MR4093616,MR4198717,MR4248454,MR3362777}. Furthermore, due to the higher regularity of solutions for the constant mobility, very recently Abels and Hurm obtained an explicit rate of convergence in this case \cite{abels2023strong}. The proof does not work in the case of degenerate mobility.\\ 

Concerning the degenerate mobility, in~\cite{MR4574535}, we investigated the case $m(u)=u$ with a smooth potential, motivated by biological applications \cite{CRMECA, MR4745662}. This result has been extended to the case of systems in~\cite{2023arXiv230311929C} which appear in modeling of cell-cell adhesion \cite{falco2022local,MR3948738}. The result presented in this paper (Theorem \ref{thm:final}) completes the nonlocal-to-local convergence program by examining the mobility $m(u)=u(1-u)$ and logarithmic potential, which is a typical framework in the analysis of the Cahn-Hilliard equation.\\

Finally, we wish to put our research in a broader context. First, we remark that \eqref{eq:CHE1}--\eqref{eq:CHE2} can be interpreted as a gradient flow in the Wasserstein space with nonlinear mobility. Such problems have been extensively studied recently \cite{MR2565840, MR3659834, MR3558359, MR2921215, MR2745794, MR2672546,
MR2448650,carrillo2023structure}. Second, the question of passing to the limit from a nonlocal equation to a local one attracts researchers studying different equations. This includes porous media equation \cite{carrillo2023nonlocal, burger2022porous, MR1821479,Hecht2023porous,MR3913840}, hyperbolic conservation laws \cite{coclite2023ole} and cross-diffusion systems \cite{elbar2023inviscid, david2023degenerate}.

\section{Existence of solutions}\label{sect:existence}
In~\cite{MR1638739}, Giacomin and Lebowitz proved the existence and uniqueness of a weak solution to~\eqref{eq:CHE1}-\eqref{eq:CHE2} on the torus. In~\cite{MR4241616} Frigeri, Gal, and Grasselli obtain the existence and uniqueness in the case of a bounded domain with Neumann boundary conditions. The following two propositions can be adapted from their work. 

\begin{proposition}[Existence of weak solutions]
\label{prop:existence_weak_sol} {~\cite[Theorem 4.1]{MR1638739} or \cite[Theorem 2.3]{MR4241616}}. 
 Let $u^{0}$ be a measurable function such that $F(u_0)\in L^{1}(\Td)$ and $\phi(u_{0})\in L^{1}(\Td)$ where $\phi\in C^{2}(0,1)$ is defined as $m''(s)\phi(s)=1$, $\phi(0)=\phi'(0)=0$. Then there exists a unique weak solution $u$ of~\eqref{eq:CHE1}-\eqref{eq:CHE2} such that 
$u \in L^{2}(0,T;H^{1}(\Td))$, $0\le u\le 1$ a.e. in $(0,T)\times\Td$ and for all $\varphi\in C_c^{\infty}([0,T)\times\Td)$ we have
\begin{multline*}
 -\int_0^T \int_{\Td} u\, \p_t \varphi \diff t - \int_{\Td} u^0(x)\, \varphi(0,x)\diff x+\int_0^T\int_{\Td}m(u)F''(u)\nabla u\nabla\varphi\diff x\diff t+ \\ +\int_0^T \int_{\Td}m(u)\nabla ((J_{\eps}\ast 1)u- J_{\eps}\ast u) \nabla\varphi\diff x \diff t=0.
\end{multline*} 
\end{proposition}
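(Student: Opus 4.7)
The strategy is the classical regularization–compactness scheme used in~\cite{MR1638739} and~\cite{MR4241616}, adapted to our specific mobility and kernel. The starting point is to rewrite the equation in a quasilinear form. Expanding $\mu_\eps = B_\eps[u_\eps]+F'(u_\eps)$ and using the identity $m(s)F''(s)=1-2\theta s(1-s)$ from~\eqref{eq:identities_doublewell}, the system \eqref{eq:CHE1}--\eqref{eq:CHE2} reads
\[
\p_t u_\eps = \DIV\bigl(A_\eps(u_\eps)\nabla u_\eps\bigr) - \DIV\bigl(m(u_\eps)\,\nabla J_\eps\ast u_\eps\bigr),\qquad A_\eps(s)=s(1-s)\bigl[(J_\eps\ast 1)-2\theta\bigr]+1.
\]
By the condition~\eqref{eq:constraint_theta}, the diffusion coefficient $A_\eps$ is bounded between strictly positive constants for $s\in[0,1]$, so the equation is uniformly parabolic in that range. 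The nonlocal term is a lower-order perturbation because $\nabla J_\eps\in L^1(\Td)$ under the assumption $\al<d-1$ (see Remark~\ref{rem:kernel}).

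To produce a solution I would replace the singular potential by a regularization $F_\delta\in C^2(\R)$, extending the logarithmic branches of $F$ quadratically outside $[\delta,1-\delta]$ so that $F_\delta'$ is globally Lipschitz, $F_\delta\to F$ pointwise on $(0,1)$, and $F_\delta\ge -C$. For fixed $\delta>0$, a Galerkin approximation along the eigenfunctions of $-\Delta$ on $\Td$ provides finite-dimensional ODEs with locally Lipschitz right-hand side; the energy identity
\[
\frac{d}{dt}E_\eps^\delta(u_\eps^\delta) + \int_{\Td} m(u_\eps^\delta)|\nabla\mu_\eps^\delta|^2\diff x = 0
\]
yields global existence and a uniform-in-$\delta$ bound on $u_\eps^\delta$ in $L^\infty(0,T;L^2(\Td))$ together with $\sqrt{m(u_\eps^\delta)}\,\nabla \mu_\eps^\delta\in L^2$.

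The decisive step to recover the singular potential in the limit $\delta\to 0$ is the entropy estimate. Testing the equation with $\phi_\delta'(u_\eps^\delta)$, where $\phi_\delta$ is a regularization of $\phi$ satisfying $m\phi''=1$ on $(0,1)$ (so $\phi(s)\sim s\log s+(1-s)\log(1-s)$ near $\{0,1\}$), cancels the degeneracy of the mobility and gives
\[
\int_{\Td}\phi_\delta(u_\eps^\delta(t))\diff x + \int_0^t\!\!\int_{\Td} F_\delta''(u_\eps^\delta)|\nabla u_\eps^\delta|^2 \diff x\diff s \le C,
\]
once the cross term arising from $\nabla B_\eps[u_\eps^\delta]$ is absorbed via Cauchy--Schwarz using the smoothness of $J_\eps$ and the positivity gap in~\eqref{eq:constraint_theta}. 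In particular one obtains a uniform $L^2$ bound on $\nabla u_\eps^\delta$ and hence on $u_\eps^\delta$ in $L^2(0,T;H^1(\Td))$. The control of $\|\phi_\delta(u_\eps^\delta)\|_{L^\infty_t L^1_x}$, combined with the logarithmic growth of $\phi$ near the endpoints, is what forces $0\le u_\eps\le 1$ a.e.\ in the limit.

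Passage to the limit $\delta\to 0$ proceeds by Aubin--Lions: the $H^1_x$ bound and the $H^{-1}_x$ bound on $\p_t u_\eps^\delta$ (obtained from the equation) give strong $L^2$-convergence of $u_\eps^\delta$, enough to identify the nonlinear terms $m(u_\eps^\delta)$, $m(u_\eps^\delta)\nabla J_\eps\ast u_\eps^\delta$, and (via monotonicity/Minty applied to the convex part of $F_\delta$) the singular flux $m(u_\eps^\delta)F_\delta''(u_\eps^\delta)\nabla u_\eps^\delta$. The main obstacle is precisely this last identification, because $F'$ is singular at $0$ and $1$; the entropy bound is what guarantees that $u_\eps\in(0,1)$ on the relevant set and allows the standard Minty trick to apply. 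Uniqueness follows the usual pattern: given two solutions $u_1,u_2$ with the same datum, the difference $w=u_1-u_2$ has zero mean and can be tested with $\mathcal{N}w:=(-\Delta)^{-1}w$; the monotonicity of the convex part of $F$, together with the Lipschitz continuity of $m$ and the boundedness of $\nabla J_\eps$, closes a Grönwall estimate on $\|w\|_{H^{-1}(\Td)}^2$.
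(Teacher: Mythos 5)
Your outline is correct, but it is worth noting that the paper does not actually prove this proposition: it is imported wholesale from Giacomin--Lebowitz and Frigeri--Gal--Grasselli, and the only argument the paper supplies is a one-line sketch of the bound $0\le u\le 1$, obtained by testing with $\mathrm{sign}_+(u-1)=\mathds{1}_{u>1}$ and observing that, because $m(u)=u(1-u)$ vanishes at $u=1$, the flux $m(u)\,\mathrm{sign}_+(u-1)\,\nabla\mu=-u\,|u-1|_+\nabla\mu$ is again in divergence form, so $\int_{\Td}|u-1|_+\diff x$ is conserved. Your regularization--Galerkin--entropy scheme is essentially a reconstruction of the proof in the cited references, and your quasilinear rewriting with $A_\eps(s)=s(1-s)[(J_\eps\ast1)-2\theta]+1\ge 1$ is exactly the structural point the paper alludes to when it says that \eqref{eq:constraint_theta} yields existence and uniqueness; your $H^{-1}$--Gr\"onwall uniqueness argument is the right one and hinges precisely on this uniform ellipticity on $[0,1]$. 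Two refinements: (i) the mechanism forcing $0\le u\le1$ in the Elliott--Garcke approach is not the logarithmic growth of $\phi$ (which is bounded on $[0,1]$) but the blow-up of the regularized entropy $\phi_\delta$ outside $[0,1]$ as $\delta\to0$, combined with the uniform bound on $\int_{\Td}\phi_\delta(u_\delta)\diff x$; the paper's truncation sketch is an alternative route exploiting the specific form of $m$. (ii) The Minty trick is unnecessary for identifying the singular flux: by \eqref{eq:identities_doublewell} the product $m(s)F''(s)=1-2\theta s(1-s)$ (and its regularized version) is bounded and continuous, so strong convergence of $u_\delta$ together with weak $L^2$ convergence of $\nabla u_\delta$ passes to the limit directly --- this cancellation is the whole point of pairing $m(u)=u(1-u)$ with the Flory--Huggins potential. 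Finally, you correctly read $m(s)\phi''(s)=1$ in place of the statement's typo $m''(s)\phi(s)=1$, consistent with \eqref{eq:intro_entropy}.
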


\begin{remark}
The weak solutions defined in Proposition~\ref{prop:existence_weak_sol} can be shown to have higher regularity (under some assumptions on the initial condition). In particular, it is possible to prove the existence/uniqueness of a strong solution satisfying the strict separation property~\cite{MR4241616}. 
\end{remark}
\phantom{...}\\
Let us sketch quickly the proof of the bound $0\leq u \leq 1$ which is the most important for the current work. Let $0\leq u^0 \leq 1$ and let $\mbox{sign}_+(u-1) = \mathds{1}_{u>1}$. Multiplying \eqref{eq:CHE1}-\eqref{eq:CHE2} by $\mbox{sign}_+(u-1)$ and using the notation $|u-1|_+ = (u-1)\mathds{1}_{u>1}$ we get
$$
\p_t |u_{\eps}-1|_+ = -\DIV(u_{\eps}\,|u_{\eps}-1|_+\,\nabla \mu_{\eps})
$$
so that $\partial_t \int_{\Td} |u_{\eps}-1|_+ \diff x = 0$. The bound follows from the one on the initial condition.\\

It can be further proved that the solutions in Proposition \ref{prop:existence_weak_sol} satisfy the following energy/entropy estimates (at least as inequalities) uniformly in $\eps$:

\begin{proposition}[Energy/ entropy estimates]\label{prop:en/ent}
Let $u_{\eps}$ be a sequence of weak solutions as defined in Proposition~\ref{prop:existence_weak_sol} with same initial condition $u^{0}$ (one can also consider a sequence of initial conditions which converges strongly). Then, defining the energy and entropy:
\begin{align}
&E_{\varepsilon}[u]:=\int_{\Td}F(u)\diff x+\f{1}{4}\int_{\Td}\int_{\Td}\f{\omega_{\eps}(y)}{\eps^{2-\alpha}|y|^\alpha}|u(x)-u(x-y)|^{2}\diff x\diff y, \label{eq:intro_energy}
\\ &\Phi[u]:=\int_{\Td}\phi(u)\diff x,\quad \phi''(u)=\f{1}{m(u)}, \,\phi(0)=\phi'(0)=0, \label{eq:intro_entropy}
\end{align}
we have
\begin{align}
&\int_{\Td}u_{\eps}(t,\cdot)\diff x=\int_{\Td}u^{0}\diff x,\label{PWreg2} \\
&\f{d}{dt} E[u_\eps] + \int_{\Td} m(u_{\eps}) \, |\nabla \mu_\eps|^2 \le 0,\label{eq:energy}\\
&\f{d}{dt}\Phi[u_\eps] + \frac{1}{2} \int_{\Td} \int_{\Td} \f{\omega_{\varepsilon}(y)}{\eps^{2-\alpha}|y|^\alpha} \, |\nabla u_{\varepsilon}(x) - \nabla u_{\varepsilon}(x-y)|^2 \diff x \diff y + \int_{\Td} F''(u_{\eps}) |\nabla u_{\eps}|^{2} \diff x\le 0,\label{eq:entropy}
\end{align}
where $\mu_{\eps}$ is the chemical potential defined in~\eqref{eq:CHE2}.  
\end{proposition}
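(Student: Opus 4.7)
The plan is to derive all three bounds from formal chain-rule identities obtained by testing \eqref{eq:CHE1} with the natural candidates ($1$, $\mu_\eps$, and $\phi'(u_\eps)$), and then to justify them at the level of the Galerkin/regularization scheme used in Proposition~\ref{prop:existence_weak_sol} (following~\cite{MR4241616}); the loss from equality to inequality in \eqref{eq:energy}-\eqref{eq:entropy} appears when one passes to the limit and exploits only the lower semicontinuity of the nonnegative dissipation terms. All three computations use that $\Td$ has no boundary, so integration by parts produces no boundary contribution. Mass conservation \eqref{PWreg2} is immediate: integrating \eqref{eq:CHE1} over $\Td$, the right-hand side is a divergence and therefore $\tfrac{d}{dt}\int_{\Td} u_\eps \diff x = 0$.

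For \eqref{eq:energy}, I would test \eqref{eq:CHE1} with $\mu_\eps$. Integration by parts on the right gives $-\int_{\Td} m(u_\eps)|\nabla \mu_\eps|^2 \diff x$. For the left side, write $\mu_\eps = F'(u_\eps)+B_\eps[u_\eps]$: the bulk term produces $\tfrac{d}{dt}\int_{\Td} F(u_\eps) \diff x$ by the chain rule, while the nonlocal term gives $\tfrac{d}{dt}\bigl(\tfrac{1}{4}\int_{\Td}\!\int_{\Td} J_\eps(x-y)|u_\eps(x)-u_\eps(y)|^2 \diff x\diff y\bigr)$, after symmetrizing $\int B_\eps[u_\eps]\,\partial_t u_\eps$ using the evenness $J_\eps(x-y)=J_\eps(y-x)$. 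Summing yields \eqref{eq:energy} as an equality at the regularized level.

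For \eqref{eq:entropy}, I would test \eqref{eq:CHE1} with $\phi'(u_\eps)$. Integration by parts together with $\phi''(u)m(u)=1$ gives
\begin{equation*}
\frac{d}{dt}\Phi[u_\eps] = -\int_{\Td}\phi''(u_\eps)m(u_\eps)\nabla u_\eps\cdot\nabla\mu_\eps\diff x = -\int_{\Td}\nabla u_\eps\cdot\nabla\mu_\eps\diff x.
\end{equation*}
Splitting $\nabla\mu_\eps = \nabla B_\eps[u_\eps] + F''(u_\eps)\nabla u_\eps$, the second piece contributes $-\int_{\Td}F''(u_\eps)|\nabla u_\eps|^2\diff x$. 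For the first piece, the componentwise symmetrization identity obtained from the change of variables $x\mapsto x-y$ and the evenness of $J_\eps$ gives
\begin{equation*}
\int_{\Td}\nabla u_\eps\cdot\nabla B_\eps[u_\eps]\diff x = \frac{1}{2}\int_{\Td}\int_{\Td} J_\eps(y)\,|\nabla u_\eps(x)-\nabla u_\eps(x-y)|^2 \diff x\diff y,
\end{equation*}
which is exactly the nonlocal dissipation in \eqref{eq:entropy}.

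The main obstacle is the rigorous use of $\phi'(u_\eps)$ (and of $\mu_\eps$) as test function, since $\phi'(s)=\log(s/(1-s))$ and $F'$ are singular at $s\in\{0,1\}$, precisely where $u_\eps$ may touch its bounds. I would handle this in two ways: either (i) perform the computation on the Galerkin approximants of \cite{MR4241616}, whose solutions enjoy enough regularity and, under suitable assumptions, separation from $\{0,1\}$ to make the chain rule legal, or (ii) replace $\phi$ by a $C^2(\mathbb{R})$ regularization $\phi_k$ with bounded $\phi_k''$, run the argument for $\phi_k(u_\eps)$, and send $k\to\infty$ using the uniform bound $0\le u_\eps\le 1$, monotone convergence on $\Phi$, and Fatou's lemma on the two nonnegative dissipation terms (and analogously for the energy, truncating $F'$). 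The asymmetry between "equality for the approximation, inequality after passage to the limit" is precisely what yields the ``$\le$'' statement recorded in \eqref{eq:energy}-\eqref{eq:entropy}.
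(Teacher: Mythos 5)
Your proposal is correct and follows the same route the paper indicates: the paper's own justification is precisely the one-line remark that one tests \eqref{eq:CHE1} with $\mu_\eps$ for the energy and with $\phi'(u_\eps)$ for the entropy (deferring the rigorous implementation to the existence results of Giacomin--Lebowitz and Frigeri--Gal--Grasselli), and your symmetrization identities for the nonlocal terms and your discussion of regularizing the singular $\phi'$ and $F'$ at the approximation level are the standard way to make that sketch precise. No gaps.
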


Roughly speaking, to see these estimates, one multiplies Equation~\eqref{eq:CHE1} by $\mu$ to obtain the energy and by $\phi'(u)$ to obtain the entropy.\\

Since $F$ is not convex (i.e. $F''$ can be negative), we need to justify that the entropy estimates~\eqref{eq:entropy} indeed provide a priori estimates. More precisely, we need to prove that the negative part of the integral $\int_{\Td} F''(u_{\eps}) |\nabla u_{\eps}|^{2} \diff x$ is controlled where the negative part of $F''$ equals $-2\,\theta$. By \eqref{eq:energy}, we know that the sequence $\{u_{\eps}\}$ is bounded in $L^{\infty}(0,T; L^2(\Td))$. Hence, the sufficient control comes from Lemma \ref{lem:poincare_nonlocal_H1_L2} with $\gamma = \frac{1}{8\theta}$ 
\begin{equation}\label{eq:control_negative_part_entropy}
2\,\theta \int_{\Td}|\nabla u_{\eps} |^{2} \leq \frac{1}{4}\int_{\Td} \int_{\Td}  \frac{|\nabla u_{\eps}(x) - \nabla u_{\eps}(x-y)|^2}{\eps^{2-\alpha}|y|^\alpha} \omega_\eps(|y|) \diff x \diff y + C(\theta) \|u_{\eps} \|^2_{L^2(\Td)}.
\end{equation}

With the existence of a unique weak solution with energy/entropy bounds for all $\eps$ small enough, we are now in a position to pass to the local limit $\eps\to 0$.

\section{Convergence $\eps\to 0$}\label{sect:convergence}

We first define the weak solutions to the local Cahn-Hilliard equation. 

\begin{definition}\label{def:weak_sol_limit}
We say that $u$ is a weak solution of~\eqref{eq:CH1}-\eqref{eq:CH2} if 
$$
u \in L^{2}(0,T;H^{2}(\Td))\cap L^{\infty}(0,T;H^{1}(\Td)),
$$
$$
|u|\le 1\, \text{a.e. in $(0,T)\times\Td$},
$$
and if for all $\varphi \in C_c^{\infty}([0,T) \times \Td)$ we have 
\begin{multline*}
-\int_0^T \int_{\Td} u\, \p_t \varphi \diff x \diff t -\int_{\Td} u^0(x)\, \varphi(0,x) \diff x   = -\f{W}{2}\int_0^T \int_{\Td} \Delta u\, \nabla m(u) \cdot \nabla\varphi \diff x \diff t \\
-\f{W}{2}\int_0^T \int_{\Td}m(u)\,\Delta u\,\Delta\varphi \diff x \diff t-\int_0^T \int_{\Td}m(u)\,F''(u)\,\nabla u\cdot\nabla\varphi \diff x \diff t.    
\end{multline*}
\end{definition}

\subsection{Uniform estimates and compactness}
In order to prove Theorem~\ref{thm:final} we first collect uniform estimates in $\eps$: 

\begin{lemma}[Uniform estimates]\label{lem:uniform_est_just_eps}
The following sequences are bounded uniformly in $\eps$:
\begin{enumerate}[label=(\Alph*)]
    \item\label{item_est1} $|u_\eps|\le 1$\, a.e. in $(0,T)\times\Td$,
    \item\label{item_est2} $\{\nabla u_\eps\}_{\eps}$ in $L^2((0,T)\times \Td)$,
    \item\label{item_est3} $\{\sqrt{m(u_\eps)}\, \nabla \mu_\eps\}_{\eps}$ in $L^2((0,T)\times \Td)$,
    \item\label{item_est4} $\{\partial_t u_\eps\}_{\eps}$ in $L^2(0,T; H^{-1}(\Td))$,
    \item\label{item_est5} $\{\partial_t \nabla u_\eps\}_{\eps}$ in $L^2(0,T; H^{-2}(\Td))$,
    \item\label{item_est6}$\left\{\frac{\sqrt{\omega(y)}}{|y|^{\frac{\alpha}{2}}}\, \frac{ u_{\eps}(x) -  u_{\eps}(x-\eps\,y)}{\eps}\right\}_{\eps}$ in $L^{\infty}(0,T; L^2(\Td\times\Td))$,
    \item\label{item_est7}
    $\left\{\frac{\sqrt{\omega(y)}}{|y|^{\frac{\alpha}{2}}}\, \frac{\nabla u_{\eps}(x) - \nabla u_{\eps}(x-\eps\,y)}{\eps}\right\}_{\eps}$ in $L^{2}((0,T)\times\Td\times\Td)$.
    
\end{enumerate} 
\end{lemma}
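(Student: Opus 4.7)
The plan is to extract nearly all seven bounds directly from the energy and entropy inequalities in Proposition \ref{prop:en/ent} (integrated in time over $(0,T)$), together with the $L^\infty$ bound from Proposition \ref{prop:existence_weak_sol} and the PDE itself. The hypothesis $\sup_\eps E_\eps(u^0)<\infty$ and $\sup_\eps \Phi_\eps(u^0)<\infty$ guarantees that the right-hand sides of the energy/entropy inequalities are uniformly bounded; the initial entropy is also finite because $u^0 \in [0,1]$ on a torus of finite measure and $\phi$ is continuous on $[0,1]$. So the whole proof is essentially a bookkeeping of what these two dissipations control.

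First, \ref{item_est1} is just Proposition \ref{prop:existence_weak_sol}. Item \ref{item_est3} is the integrated form of the energy estimate \eqref{eq:energy}. Item \ref{item_est6} is a mere rewriting of the nonlocal part of the energy \eqref{eq:intro_energy} after the change of variables $y \mapsto \eps y$, which absorbs the weight $\eps^{2-\alpha}$ and turns the increment $u_\eps(x)-u_\eps(x-y)$ into $u_\eps(x)-u_\eps(x-\eps y)$; the $\tfrac{1}{\eps}$ factor shows up because of the $\eps^{-2}$ prefactor. Likewise, \ref{item_est7} follows from the first term of \eqref{eq:entropy} integrated in time, via the same rescaling. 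For \ref{item_est2}, I would combine the time-integrated entropy inequality with the Poincaré-type inequality \eqref{eq:control_negative_part_entropy}: moving $2\theta \int |\nabla u_\eps|^2$ from the possibly negative part of $\int F''(u_\eps)|\nabla u_\eps|^2$ to the left absorbs half of the nonlocal gradient dissipation provided by \ref{item_est7}, leaves a nonnegative remainder $\int (F''(u_\eps)+2\theta)|\nabla u_\eps|^2 \ge 0$, and produces a uniform $L^2$ bound on $\nabla u_\eps$ thanks to the uniform $L^\infty$ (hence $L^2$) control of $u_\eps$ from \ref{item_est1}.

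For the time-derivative bounds I would use the equation $\partial_t u_\eps = \DIV(m(u_\eps)\nabla\mu_\eps)$. Since $0\le u_\eps\le 1$ gives $m(u_\eps)\in[0,\tfrac14]$, we have $\sqrt{m(u_\eps)}$ bounded uniformly in $L^\infty$, and therefore
\begin{equation*}
\|m(u_\eps)\nabla\mu_\eps\|_{L^2((0,T)\times\Td)} \le \|\sqrt{m(u_\eps)}\|_{L^\infty}\, \|\sqrt{m(u_\eps)}\nabla\mu_\eps\|_{L^2((0,T)\times\Td)}
\end{equation*}
is controlled by \ref{item_est3}. Testing the equation against $\varphi\in L^2(0,T;H^1(\Td))$ and integrating by parts in space gives \ref{item_est4}. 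For \ref{item_est5}, testing $\partial_t \nabla u_\eps$ against a vector field $\vec\varphi\in L^2(0,T;H^2(\Td))$ (again using integration by parts in space and the PDE) yields
\begin{equation*}
\Big|\int_0^T\!\!\int_{\Td}\partial_t\nabla u_\eps\cdot\vec\varphi\,\diff x\diff t\Big|
= \Big|\int_0^T\!\!\int_{\Td} m(u_\eps)\nabla\mu_\eps\cdot\nabla\DIV\vec\varphi\,\diff x\diff t\Big|
\le \|m(u_\eps)\nabla\mu_\eps\|_{L^2}\, \|\vec\varphi\|_{L^2(H^2)},
\end{equation*}
and the same bound as above closes \ref{item_est5}.

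The only mildly delicate step is \ref{item_est2}: without boundedness of $u_\eps$, controlling the negative part $-2\theta$ of $F''$ would fail. Everything else is a rearrangement of \eqref{eq:energy}--\eqref{eq:entropy} and a use of the PDE combined with \ref{item_est1} and \ref{item_est3}. In particular, the whole argument is insensitive to the specific form of $m$ beyond $m\in C^1$ and $0\le m(u_\eps)\le C$, consistent with the remark in the paper that \ref{thm:final} extends to $m(u)=u^k(1-u)^l$ with $k,l\ge 1$.
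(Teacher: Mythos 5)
Your proof is correct and follows essentially the same route as the paper: every bound is read off from the energy/entropy inequalities of Proposition~\ref{prop:en/ent} (with the negative part of $F''$ absorbed via a nonlocal Poincaré inequality), the $L^\infty$ bound of Proposition~\ref{prop:existence_weak_sol}, and the PDE rewritten as $\partial_t u_\eps=\DIV(\sqrt{m(u_\eps)}\,\sqrt{m(u_\eps)}\,\nabla\mu_\eps)$. The only cosmetic difference is in \ref{item_est2}, where the paper applies the mean-zero Poincaré inequality of Lemma~\ref{lem:Poincare_with_average} to $\nabla u_\eps$, while you reuse the $H^1$--$L^2$ variant \eqref{eq:control_negative_part_entropy} (Lemma~\ref{lem:poincare_nonlocal_H1_L2}) together with the $L^2$ bound on $u_\eps$; both are equally valid one-line applications of the appendix lemmas.
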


\begin{proof}
The estimates are a consequence of Propositions~\ref{prop:existence_weak_sol}, \ref{prop:en/ent} and Lemma~\ref{lem:Poincare_with_average}. To be more precise, \ref{item_est1} is a consequence of Proposition \ref{prop:existence_weak_sol} while \ref{item_est3}, \ref{item_est6}, \ref{item_est7} follows from the energy/entropy identities \eqref{eq:energy}--\eqref{eq:entropy} and a change of variables $y\to \eps y$ (note that the negative part in the entropy identity \eqref{eq:entropy} is controlled as explained in \eqref{eq:entropy}). Then, \ref{item_est2} can be easily deduced by the Poincaré's inequality in Lemma \ref{lem:Poincare_with_average} because the average of the gradient on the torus is 0. Next, \ref{item_est4} follow from the PDE \eqref{eq:CHE1}--\eqref{eq:CHE2} by writing
$$
\partial_t u_{\eps} = \DIV(\sqrt{m(\ueps)}\,\sqrt{m(\ueps)}\,\nabla \ueps)
$$
and using estimates \ref{item_est1}, \ref{item_est3}. Finally, \ref{item_est5} is a direct consequence of \ref{item_est4}.\\
\end{proof}

\begin{lemma}[compactness]\label{lem:compact_ueps_collected}
    There exists $u\in L^{\infty}((0,T)\times\Td)\cap L^{2}(0,T;H^{2}(\Td))$, $0\le u\le 1$, such that, up to a subsequence not relabeled, $
    u_{\eps}\to u \mbox{ a.e., strongly in } 
    L^{2}(0,T;H^{1}(\Td))$ and $L^{p}((0,T)\times\Td) \, \mbox{ for all } p \in [1,\infty).$
\end{lemma}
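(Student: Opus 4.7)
The plan is to combine an Aubin-Lions argument with a Bourgain-Brezis-Mironescu/Ponce type nonlocal compactness theorem. First, I would extract strong $L^2_{t,x}$ convergence: from \ref{item_est1}-\ref{item_est2} the sequence $\{u_\eps\}$ is bounded in $L^2(0,T;H^1(\Td))$, while \ref{item_est4} gives a uniform bound on $\{\partial_t u_\eps\}$ in $L^2(0,T;H^{-1}(\Td))$. Since $H^1(\Td) \hookrightarrow L^2(\Td)$ is compact, Aubin-Lions delivers a non-relabeled subsequence with $u_\eps \to u$ strongly in $L^2(0,T;L^2(\Td))$, and, after a further extraction, a.e.\ on $(0,T)\times\Td$. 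The bound $0\le u\le 1$ then passes to the limit from \ref{item_est1}, and dominated convergence upgrades this to $u_\eps\to u$ in $L^p((0,T)\times\Td)$ for every $p\in[1,\infty)$.

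Next, I would promote the convergence to $L^2(0,T;H^1(\Td))$ and simultaneously derive the $H^2$ spatial regularity of $u$ by applying the nonlocal compactness theorem of Bourgain-Brezis-Mironescu \cite{bourgain2001another} and Ponce \cite{MR2041005} to $\{\nabla u_\eps\}$. After the substitution $y \mapsto \eps y$, estimate \ref{item_est7} becomes
\[
\int_0^T \!\int_{\Td}\!\int_{\Td} \rho_\eps(z)\,\frac{|\nabla u_\eps(t,x)-\nabla u_\eps(t,x-z)|^2}{|z|^2}\,\diff z\,\diff x\,\diff t \le C,
\]
where $\rho_\eps(z):=c_\alpha\,\omega_\eps(z)\,|z|^{2-\alpha}/\eps^{2-\alpha}$ is a family of radial mollifiers (the normalizing constant $c_\alpha^{-1}=\int \omega(y)|y|^{2-\alpha}\diff y$ is finite since $\alpha<d-1$), with $\int\rho_\eps=1$ and support shrinking to $\{0\}$. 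Combined with the $L^2$ bound \ref{item_est2}, the Ponce compactness theorem (applied componentwise) yields that $\{\nabla u_\eps\}$ is relatively compact in $L^2((0,T)\times\Td;\R^d)$ and that the limit lies in $L^2(0,T;H^1(\Td))$. By uniqueness of limits this limit must be $\nabla u$, so $u\in L^2(0,T;H^2(\Td))$ and $u_\eps\to u$ strongly in $L^2(0,T;H^1(\Td))$.

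The main obstacle is the second step: the Ponce/BBM result is classically stated for scalar functions on a purely spatial domain, while here I need a space-time, vector-valued version. A clean route is to work slice-by-slice in time: after Step 1, along a further subsequence one has, for a.e.\ $t$, that $\nabla u_\eps(t,\cdot)$ is bounded in $L^2(\Td)$ and satisfies the BBM seminorm bound (upgrading the $L^2$-in-time bound \ref{item_est7} to a pointwise-in-time statement along a subsequence via Fubini). Applying Ponce to each component of $\nabla u_\eps(t,\cdot)$ and concluding by dominated convergence using \ref{item_est2} in the time integral gives both the strong $L^2_{t,x}$ convergence of $\nabla u_\eps$ and the regularity $\nabla u \in L^2(0,T;H^1(\Td))$. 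One also has to verify that $\rho_\eps$ constructed from $\omega_\eps(z)/|z|^\alpha$ is indeed an admissible BBM family, which follows from the compact support and radial symmetry of $\omega$ together with the range $\alpha\in[0,d-1)$.
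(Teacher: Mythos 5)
Your first step (strong $L^2_{t,x}$ and a.e.\ convergence via Aubin--Lions from \ref{item_est2} and \ref{item_est4}, then $0\le u\le 1$ and $L^p$ convergence from the uniform bound \ref{item_est1}) is correct and coincides with the paper's argument. The gap is in the second step. The bound \ref{item_est7} is integrated in time, so the Ponce/BBM theorem in the form actually available here (Theorem \ref{thm:ponce_tx}, with the kernel rescaled as in Remark \ref{rem:adapt_ponce} --- essentially your $\rho_\eps$) yields only \emph{spatial} equicontinuity of $\{\nabla u_\eps\}$ in the sense of \eqref{eq:equicontinuity_Lp_space}; it does not by itself give relative compactness in $L^2((0,T)\times\Td)$, since nothing so far excludes oscillations in time. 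Your proposed repair --- working slice-by-slice in $t$ --- does not close this gap. The ``Fubini upgrade'' from $\int_0^T g_\eps(t)\,\diff t\le C$ (with $g_\eps(t)$ the BBM seminorm of $\nabla u_\eps(t,\cdot)$) to a statement $\sup_\eps g_\eps(t)<\infty$ for a.e.\ $t$ along a subsequence is false in general: Chebyshev bounds each exceptional set $\{t:\,g_{\eps_n}(t)>M\}$ by $C/M$, but the union over $n$ need not be small, and one can easily arrange $\int_0^T g_n\,\diff t\le C$ for all $n$ while $\limsup_n g_n(t)=+\infty$ for a.e.\ $t$. Moreover, even granting pointwise-in-$t$ compactness and identification of the slice limits with $\nabla u(t,\cdot)$, the concluding ``dominated convergence in the time integral'' requires an $\eps$-uniform integrable majorant for $t\mapsto\|\nabla u_\eps(t,\cdot)\|^2_{L^2(\Td)}$, which \ref{item_est2} does not supply.

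The missing ingredient is compactness in time of the gradient, which is exactly what estimate \ref{item_est5} ($\{\partial_t\nabla u_\eps\}$ bounded in $L^2(0,T;H^{-2}(\Td))$) is there for, and which your proof never invokes. The paper combines the spatial equicontinuity \eqref{eq:equicontinuity_Lp_space} coming from Theorem \ref{thm:ponce_tx} with \ref{item_est5} through a generalized Aubin--Lions-type lemma (\cite[Lemma D.1]{MR4574535}) to get $\nabla u_\eps\to\nabla u$ strongly in $L^2((0,T)\times\Td)$; the regularity $u\in L^2(0,T;H^2(\Td))$ then follows from the BBM liminf/weak-limit characterization of the difference quotients. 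Your verification that $\rho_\eps$ is an admissible BBM family and the componentwise application to the vector field $\nabla u_\eps$ are fine; the space-time issue is the one that genuinely needs \ref{item_est5}.
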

\begin{proof}
First, by a classical Aubin-Lions lemma and estimates~\ref{item_est2}, \ref{item_est4} we obtain strong convergence in $L^{2}((0,T)\times\Td)$ (and a.e.). Then, arguing by interpolation and \ref{item_est1}, we deduce convergence in all $L^{p}((0,T)\times\Td)$. For the strong convergence in $L^{2}(0,T;H^{1}(\Td))$, we first obtain compactness in space of the gradient with~\ref{item_est7} and Theorem~\ref{thm:ponce_tx}. Compactness in time follows from~\ref{item_est5}. An Aubin-Lions argument yields the result (see \cite[Lemma D.1]{MR4574535} for the precise statement). 
\end{proof}

\subsection{Nonlocal gradient $S_{\eps}$ and its convergence properties}
The limit equation that we obtain is a fourth-order equation. The number of derivatives can be reduced by the integration by parts in a weak formulation. Therefore, to obtain this weak formulation we need to mimic the same integration by parts at the nonlocal level (i.e. with $B_{\eps}[u]$ instead of $-\Delta u$). This can be done if we introduce a nonlocal approximation of $\nabla$. Thus we define the operator (see e.g.~\cite{MR4574535})
\begin{equation}\label{eq:nonlocal_gradient}
S_{\eps}[\varphi](x,y):=\f{\sqrt{\omega_{\eps}(y)}}{\sqrt{2}\, \eps^{1-\frac{\alpha}{2}}|y|^{\frac{\alpha}{2}}}(\varphi(x-y)-\varphi(x)), 
\end{equation}
which has the following algebraic properties:
\begin{lemma}\label{lem:S_properties} The operator $S_{\eps}$ satisfies:
\begin{enumerate}[label=(S\arabic*)]
    \item $S_{\eps}$ is a linear operator that commutes with derivatives with respect to $x$,
    \item\label{propS_product_rule} for all functions $f,g: \Td \to \R$ we have the Leibniz rule up to an error:
    \begin{align*}S_{\eps}[fg](x,y)-S_{\eps}[f](x,y)g(x)&-S_{\eps}[g](x,y)f(x)= \\ &= \f{\sqrt{\omega_{\eps}(y)}}{\sqrt{2}\, \eps^{1-\frac{\alpha}{2}}|y|^{\frac{\alpha}{2}}}(f(x-y)-f(x)) \, (g(x-y)-g(x)).
    \end{align*}
    \item\label{propS_nonneg} for all $u, \varphi \in L^2(\Td)$
    $$
    \langle B_{\eps}[u](\cdot),\varphi(\cdot)\rangle_{L^{2}(\Td)}=\langle S_{\eps}[u](\cdot,\cdot),S_{\eps}[\varphi](\cdot,\cdot)\rangle_{L^{2}(\Td \times \Td)}.
    $$
\end{enumerate}
\end{lemma}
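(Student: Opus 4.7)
The three properties are essentially algebraic manipulations at the pointwise level, so the plan is to verify each one directly from the definition \eqref{eq:nonlocal_gradient}. Throughout, I will write $c_{\eps}(y) := \frac{\sqrt{\omega_{\eps}(y)}}{\sqrt{2}\,\eps^{1-\alpha/2}|y|^{\alpha/2}}$ so that $S_{\eps}[\vp](x,y) = c_{\eps}(y)(\vp(x-y)-\vp(x))$, and I will use repeatedly that the kernel $J_{\eps}(y) = 2\,c_{\eps}(y)^{2}$ is even in $y$ (which is inherited from the radial symmetry of $\omega$).

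For property \textbf{(S1)}, linearity is immediate from the linearity of the difference $\vp \mapsto \vp(\cdot-y)-\vp(\cdot)$ and the fact that $c_{\eps}(y)$ does not depend on $x$. Commutation with $\nabla_x$ follows since $\partial_{x_i}(\vp(x-y)-\vp(x)) = (\partial_{x_i}\vp)(x-y) - (\partial_{x_i}\vp)(x)$, which is exactly $c_{\eps}(y)^{-1} S_{\eps}[\partial_{x_i}\vp](x,y)$.

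For property \textbf{(S2)}, I would apply the standard pointwise identity
\begin{equation*}
f(x-y)g(x-y) - f(x)g(x) = (f(x-y)-f(x))\,g(x) + f(x)\,(g(x-y)-g(x)) + (f(x-y)-f(x))(g(x-y)-g(x)),
\end{equation*}
multiply both sides by $c_{\eps}(y)$, and identify the first two terms on the right with $S_{\eps}[f](x,y)\,g(x) + S_{\eps}[g](x,y)\,f(x)$. The remaining term gives exactly the claimed Leibniz error.

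Property \textbf{(S3)} is the nonlocal analogue of integration by parts and is the only place where the symmetry of $J_{\eps}$ enters. Starting from
\begin{equation*}
\langle B_{\eps}[u],\vp\rangle_{L^{2}(\Td)} = \int_{\Td}\int_{\Td} J_{\eps}(y)\,(u(x)-u(x-y))\,\vp(x)\,\diff y\,\diff x,
\end{equation*}
I would perform the change of variables $x\mapsto x+y$ followed by $y\mapsto -y$, using $J_{\eps}(-y)=J_{\eps}(y)$, to rewrite the same integral as $-\int\int J_{\eps}(y)(u(x)-u(x-y))\vp(x-y)\,\diff y\,\diff x$. Averaging these two expressions gives the symmetrized form
\begin{equation*}
\langle B_{\eps}[u],\vp\rangle_{L^{2}(\Td)} = \tfrac{1}{2}\int_{\Td}\int_{\Td} J_{\eps}(y)\,(u(x)-u(x-y))(\vp(x)-\vp(x-y))\,\diff y\,\diff x,
\end{equation*}
which, upon recognizing $\tfrac{1}{2}J_{\eps}(y) = c_{\eps}(y)^{2}$, equals $\langle S_{\eps}[u],S_{\eps}[\vp]\rangle_{L^{2}(\Td\times\Td)}$. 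The only mild subtlety is to ensure the double integrals are absolutely convergent so that the change of variables is legitimate; this is guaranteed by $J_{\eps}\in W^{1,1}_{loc}$ (Remark~\ref{rem:kernel}) together with $u,\vp\in L^{2}(\Td)$ and a Cauchy–Schwarz argument, and is not a real obstacle. Overall, none of the three items is technically hard; the cleanest sequencing is (S1) by inspection, (S2) by the algebraic identity above, and (S3) by the symmetrization trick.
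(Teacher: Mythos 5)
Your proof is correct and is essentially the same elementary argument as the one the paper defers to in \cite[Lemma 3.4]{MR4574535}: direct computation for (S1)--(S2) and the symmetrization/change-of-variables trick, using the evenness of $J_{\eps}$ and $J_{\eps}=2c_{\eps}^{2}$, for (S3). The integrability remark at the end (which also guarantees $S_{\eps}[u]\in L^{2}(\Td\times\Td)$ for $u\in L^{2}(\Td)$, so both inner products are well defined) is the right observation and closes the only potential gap.
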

For the simple proof, we refer to \cite[Lemma 3.4]{MR4574535}.\\

Due to the property \ref{propS_nonneg}, when proving the convergence $\eps \to 0$, we will need to deal with the operator $S_{\eps}$. Its asymptotic properties are summarized in the following result. We remark that it is a substantial upgrade of \cite[Lemma 3.4 (S4)]{MR4574535} in two directions: allowing to consider singular potentials and including the case of general mobility $m(u)$. To simplify the notation, we skip $\diff y \diff x \diff t$ in the integrals in the statement of Lemma \ref{lem:conv_S_collected} and its proof.

\begin{lemma}\label{lem:conv_S_collected}
Let $f\in C^{1}(\R, \R)$. Suppose that $\{u_{\varepsilon}\}$ converges strongly in $L^2(0,T; H^1(\Td))$ to some $u \in L^{2}(0,T;H^{2}(\Td))$. Moreover, assume that $0\le u_{\eps} \le 1$ and that the sequence
\begin{equation}\label{eq:boundedness_gradients}
\left\{\frac{\sqrt{\omega(y)}}{|y|^{\frac{\alpha}{2}}}\, \frac{\nabla u_{\eps}(x) - \nabla u_{\eps}(x-\eps\,y)}{\eps}\right\} \mbox{ is bounded in } L^2((0,T)\times\Td\times\Td).
\end{equation}
Let $W := \int_{\Td} \frac{\omega(y)}{|y|^{\alpha-2}} \diff y$. Then, when $\eps\to 0$, for all smooth $\varphi(t,x): [0,T]\times\Td \to \R$:
    \begin{align}
     &\int_0^T \int_{\Td} \int_{\Td} S_{\varepsilon}[f(u_{\eps})]\, S_{\eps}[\nabla u_{\eps}] \cdot \nabla \varphi(t,x) \to \f{W}{2}\, \int_0^T \int_{\Td} \nabla f(u(t,x)) \cdot D^{2} u(t,x) \nabla \varphi(t,x) \label{Sconv1},\\
     &\int_0^T \int_{\Td} \int_{\Td} S_{\eps}[f(u_{\eps})] \,  S_{\varepsilon}[u_{\varepsilon}] \, \varphi(t,x) \to \f{W}{2}\, \int_0^T \int_{\Td} \nabla f(u(t,x))\cdot \nabla u(t,x) \, \varphi(t,x)\label{Sconv5},\\
    &\int_0^T \int_{\Td} \int_{\Td} S_{\varepsilon}[u_{\varepsilon}]  \nabla f(u_{\eps}) \cdot S_{\eps}[\nabla \varphi(t,x)] \to \f{W}{2}\, \int_0^T \int_{\Td} \nabla f(u(t,x))\cdot  D^2\varphi(t,x) \nabla u(t,x)\label{Sconv6},\\
    &\int_0^T \int_{\Td} \int_{\Td} S_{\varepsilon}[u_{\varepsilon}]  f(u_{\eps}) \, S_{\eps}[\varphi(t,x)] \to \f{W}{2}\, \int_0^T \int_{\Td}f(u(t,x))\,  \nabla u(t,x)\cdot\nabla\varphi(t,x).\label{Sconv4}
    \end{align}
\end{lemma}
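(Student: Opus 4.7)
The strategy is to perform the change of variables $y = \eps z$ in every integral, which brings each $S_\eps$ operator to the form
\[
\widetilde S_\eps[v](t,x,z) := \frac{\sqrt{\omega(z)}}{\sqrt{2}\,|z|^{\alpha/2}}\,\frac{v(t,x-\eps z) - v(t,x)}{\eps},
\]
integrated over $(t,x,z)\in (0,T)\times\Td\times\supp\omega$. The core building block is the following strong convergence: if $v_\eps \to v$ in $L^2(0,T;H^1(\Td))$, then $\widetilde S_\eps[v_\eps] \to \frac{\sqrt{\omega(z)}}{\sqrt{2}\,|z|^{\alpha/2}}(-\nabla v(t,x)\cdot z)$ strongly in $L^2((0,T)\times\Td\times\supp\omega)$. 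This follows from the identity $\frac{v_\eps(x-\eps z) - v_\eps(x)}{\eps} = -\int_0^1 \nabla v_\eps(x - s\eps z)\cdot z\,ds$, the Sobolev difference estimate $\|v_\eps(\cdot - \eps z) - v_\eps(\cdot)\|_{L^2} \le \eps|z|\,\|\nabla v_\eps\|_{L^2}$, $L^2$-continuity of translations, and the fact that $\omega(z)/|z|^\alpha$ is integrable on $\supp\omega$ since $\alpha < d-1$.

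I then plan to apply this building block to each factor separately. For $v_\eps = u_\eps$ it gives strong convergence of $\widetilde S_\eps[u_\eps]$ directly from the hypothesis $u_\eps \to u$ in $L^2(0,T;H^1(\Td))$. For $v_\eps = f(u_\eps)$, the hypothesis $0\le u_\eps\le 1$ is crucial: because $f\in C^1(\R)$ is bounded together with $f'$ on $[0,1]$, the a.e.\ convergence of $u_\eps$ and dominated convergence yield $\nabla f(u_\eps) = f'(u_\eps)\nabla u_\eps \to f'(u)\nabla u$ strongly in $L^2$, hence $f(u_\eps)\to f(u)$ in $L^2(0,T;H^1(\Td))$ and the building block applies. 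For smooth $\varphi$, $\widetilde S_\eps[\varphi]$ and $\widetilde S_\eps[\nabla\varphi]$ converge uniformly to the corresponding limits. Only for $\widetilde S_\eps[\nabla u_\eps]$ no strong compactness is available, but its uniform $L^2$ bound \eqref{eq:boundedness_gradients} yields a weakly converging subsequence; its limit is identified via the duality $\int \widetilde S_\eps[\nabla u_\eps]\,\psi = -\int \widetilde S_\eps[u_\eps]\,\nabla_x\psi$ for test functions $\psi\in C_c^\infty$, combined with $u\in L^2(0,T;H^2(\Td))$, yielding weak convergence of the whole sequence to $\frac{\sqrt{\omega(z)}}{\sqrt{2}\,|z|^{\alpha/2}}(-D^2 u(x)\cdot z)$.

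Each integrand in \eqref{Sconv1}--\eqref{Sconv4} then factors as a product of a strongly (or uniformly) converging sequence with a strongly or weakly converging one, so passage to the limit is legitimate. Evaluating the remaining $z$-integral via the moment identity \eqref{as:omega}, together with the prefactor $\frac{1}{\sqrt{2}}\cdot\frac{1}{\sqrt{2}} = \frac{1}{2}$, produces the coefficient $\frac{W}{2}$ in front of each stated gradient expression after contracting indices. The main obstacle is precisely the handling of $\widetilde S_\eps[f(u_\eps)]$: without the $L^\infty$ bound on $u_\eps$ coming from the Flory--Huggins potential via Proposition~\ref{prop:existence_weak_sol}, $f'(u_\eps)$ would be uncontrolled and the chain rule would not transport strong $H^1$-convergence through the nonlinearity. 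This is exactly the improvement over the smooth-potential setting of \cite{MR4574535}.
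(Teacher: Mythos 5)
Your proposal follows essentially the same route as the paper: change of variables $y\mapsto \eps z$, strong convergence of difference quotients for sequences compact in $L^2(0,T;H^1(\Td))$ (the paper's Lemma~\ref{lem:diff_quot_strong_conv}, applied to both $u_\eps$ and $f(u_\eps)$ using $f\in C^1$ and $0\le u_\eps\le 1$), weak convergence of the gradient difference quotient identified by duality and the $H^2$ regularity of $u$, and passage to the limit in products of strongly and weakly converging factors weighted by the integrable kernel. The argument is correct and matches the paper's proof in all essential steps.
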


\begin{proof}
We write
\begin{multline*}
\int_0^T \int_{\Td} \int_{\Td} S_{\varepsilon}[f(u_{\eps})]S_{\eps}[\nabla u_{\eps}] \cdot \nabla \varphi(x) =\\
=\f{1}{2}\int_0^T \int_{\Td} \int_{\Td}\f{\omega(y)}{|y|^{\alpha}}\f{ f(u_{\eps}(x))- f(u_{\eps}(x-\eps y))}{\eps}\f{\nabla u_{\eps}(x)-\nabla u_{\eps}(x-\eps y)}{\eps} \cdot \nabla \varphi(x).
\end{multline*}
From Lemma~\ref{lem:diff_quot_strong_conv}  we know that 
\begin{equation}\label{eq:compactness_diff_quotients}
\f{v_{\eps}(x)-v_{\eps}(x-\eps y)}{\eps\,|y|} \to 
\nabla v \cdot \frac{y}{|y|} \mbox{ strongly in }  L^{\infty}_{y}(\Td; L^2_{(t,x)}((0,T)\times\Td))
\end{equation}
for any sequence $\{v_{\eps}\}$ compact in $L^2(0,T;H^1(\Td))$ and converging to some $v$, in particular for $v_{\eps} = f(u_{\eps})$. Moreover, by \eqref{eq:boundedness_gradients}, we know that $\f{\sqrt{\omega(y)}}{|y|^{\frac{\alpha}{2}}}\f{\nabla u_{\eps}(x)-\nabla u_{\eps}(x-\eps y)}{\eps}$ is uniformly bounded in $L^{2}((0,T)\times\Td\times\Td)$ so that, up to a subsequence, it has a weak limit. To identify the limit, we compute for an arbitrary smooth vector field $\phi = \phi(t,x,y)$
\begin{multline*}
\int_{0}^{T}\int_{\Td}\int_{\Td}  \f{\sqrt{\omega(y)}}{|y|^{\frac{\alpha}{2}}}\f{\nabla u_{\eps}(x)-\nabla u_{\eps}(x-\eps y)}{\eps} \cdot \phi  = -\int_{0}^{T}\int_{\Td}\int_{\Td}  \f{\sqrt{\omega(y)}}{|y|^{\frac{\alpha}{2}}}\f{u_{\eps}(x)-u_{\eps}(x-\eps y)}{\eps} \DIV_x \phi.
\end{multline*}
Using \eqref{eq:compactness_diff_quotients} (with $v_{\eps} = u_{\eps}$) and $\frac{\sqrt{\omega(y)}}{|y|^{\frac{\alpha}{2} -1 }} \in L^2(\Td) \subset L^1(\Td)$ (because we assume $\alpha<d-1$ but even $\alpha<d+2$ would be sufficient here) we can pass to the limit and obtain
$$
\lim_{\eps\to 0} \int_{0}^{T}\int_{\Td}\int_{\Td}  \f{\sqrt{\omega(y)}}{|y|^{\frac{\alpha}{2}}}\f{u_{\eps}(x)-u_{\eps}(x-\eps y)}{\eps} \DIV_x \phi =  \int_{0}^{T}\int_{\Td}\int_{\Td} \f{\sqrt{\omega(y)}}{|y|^{\frac{\alpha}{2}}} \nabla u(x) \cdot y \DIV_x \phi
$$
Moreover, using $H^2$ regularity of $u$, we can integrate by parts backwards to conclude
\begin{equation}\label{eq:weak_conv_diff_quo_grad}
\f{\sqrt{\omega(y)}}{|y|^{\frac{\alpha}{2}}}\f{\nabla u_{\eps}(x)-\nabla u_{\eps}(x-\eps y)}{\eps} \rightharpoonup \f{\sqrt{\omega(y)}}{|y|^{\frac{\alpha}{2}}} \, D^2 u(x) y   \mbox{ in }  L^{2}((0,T)\times\Td\times\Td).
\end{equation}
Now, it remains to use again that $\frac{\sqrt{\omega(y)}}{{|y|^{\frac{\alpha}{2}-1}}} \in L^2(\Td)$ and apply \eqref{eq:compactness_diff_quotients}--\eqref{eq:weak_conv_diff_quo_grad}.\\

Concerning~\eqref{Sconv5}, we compute
\begin{multline*}
\int_0^T \int_{\Td} \int_{\Td} S_{\varepsilon}[u_{\varepsilon}] S_{\eps}[f(u_{\eps})] \, \varphi(t,x)= \\ =\int_0^T \int_{\Td} \int_{\Td}\f{\omega(y)}{2\,|y|^{\alpha-2}}\f{u_{\eps}(x)-u_{\eps}(x-\eps y)}{\eps|y|}\f{f(u_{\eps}(x))-f(u_{\eps}(x-\eps y))}{\eps|y|} \, \varphi(t,x)
\end{multline*}
Clearly $\frac{\omega(y)}{|y|^{\alpha-2}} \in L^1(\Td)$. Since $f$ is $C^{1}$ and $0\le u_{\eps}\le 1$, we can apply \eqref{eq:compactness_diff_quotients} both to $u_{\eps}$ and to $f(u_{\eps})$ so that we obtain
\begin{align*}
\int_0^T \int_{\Td} \int_{\Td} S_{\varepsilon}[u_{\varepsilon}] S_{\eps}[f(u_{\eps})] \, \varphi(t,x)\to \frac{W}{2}\, \int_0^T \int_{\Td} \nabla u(x)\cdot \nabla f(u(x)) \, \varphi(t,x).
\end{align*}
Next, we demonstrate~\eqref{Sconv6}. We write the integral as 
\begin{equation*}
\int_0^T \int_{\Td} \int_{\Td} \f{\omega(y)}{2|y|^{\alpha-2}}\f{u_{\eps}(x)-u_{\eps}(x-\eps y)}{\eps\,|y|} \, \nabla f(u_{\eps}(x)) \cdot \f{\nabla \varphi(t,x)-\nabla \varphi(t,x-\eps y)}{\eps\,|y|}.
\end{equation*}
Observe that $\nabla f(u_{\eps})$ is weakly compact in $L^2((0,T)\times\Td)$, $\f{u_{\eps}(x)-u_{\eps}(x-\eps y)}{\eps\,|y|}$ is compact due to \eqref{eq:compactness_diff_quotients} while $\f{\varphi(t,x)-\varphi(t,x-\eps y)}{\eps\,|y|}$ is compact in $L^{\infty}((0,T)\times\Td\times\Td)$. Hence, using integrability of $\f{\omega(y)}{2|y|^{\alpha-2}}$ as above, we easily pass to the limit $\eps \to 0$. \\

Finally, the proof of \eqref{Sconv4} is similar to that of \eqref{Sconv6}: indeed, we have the same compactness estimates for the sequence $\{f(u_{\eps})\}$ as for $\{\nabla f(u_{\eps})\}$. 
\end{proof}

\begin{proof}[Proof of Theorem \ref{thm:final}] We only have to explain how to pass to the limit in the term $$
\int_{0}^{T}\int_{\Td}\DIV(m(u_{\eps})\nabla\mu_{\eps})\,\varphi\diff x\diff t
$$ 
where $\varphi \in C^3([0,T]\times \Td)$. Integrating by parts, we obtain 
\begin{equation}\label{eq:split_for_I1_I2_I3}
    \begin{split}
\int_{0}^{T}\int_{\Td}\DIV&(m(u_{\eps})\nabla\mu_{\eps})\,\varphi\diff x\diff t =  -\int_{0}^{T}\int_{\Td}m(u_{\eps})\nabla\mu_{\eps}\cdot\nabla\varphi\diff x\diff t = \\ = &-\int_{0}^{T}\int_{\Td}m(u_{\eps})\nabla(B_{\eps}[u_{\eps}] + F'(u_{\eps})) \cdot\nabla\varphi\diff x\diff t \\= & \int_{0}^{T}\int_{\Td}B_{\eps}[u_{\eps}]\, \nabla m(u_{\eps})\cdot\nabla\varphi\diff x\diff t
+\int_{0}^{T}\int_{\Td}B_{\eps}[u_{\eps}]\, m(u_{\eps})\Delta\varphi\diff x\diff t \\ &-\int_{0}^{T}\int_{\Td}m(u_{\eps})F''(u_{\eps})\nabla u_{\eps}\cdot\nabla\varphi\diff x\diff t =:I_{1}+I_{2}+I_{3}.
\end{split}
\end{equation}

We choose a subsequence as in Lemma \ref{lem:compact_ueps_collected}. Using~\eqref{eq:identities_doublewell} we can pass to the limit in $I_3$. It remains to prove convergence of $I_{1}$ and $I_{2}$. \\

\underline{\textit{Step 1: Convergence of term $I_{1}$}}. Using~\ref{propS_nonneg} in Lemma \ref{lem:S_properties} we write
\begin{align*}
I_{1}=&\int_{0}^{T}\int_{\Td}\int_{\Td}S_{\eps}[u_{\eps}] \, S_{\eps}[\nabla m(u_{\eps})\cdot\nabla\varphi]\diff x\diff y\diff t\\
=&\int_{0}^{T}\int_{\Td}\int_{\Td}S_{\eps}[u_{\eps}] \, S_{\eps}[\nabla m(u_{\eps})]\cdot\nabla\varphi\diff x\diff y\diff t+\int_{0}^{T}\int_{\Td}\int_{\Td}S_{\eps}[u_{\eps}] \, \nabla m(u_{\eps})\cdot S_{\eps}[\nabla\varphi]\diff x\diff y\diff t\\
&+R_{\eps}^{(1)} =: J_{1}^{(1)}+J_{2}^{(1)}+R_{\eps}^{(1)}, \phantom{int_{0}^{T}\int_{\Td}}
\end{align*}
where $R_{\eps}^{(1)}$ is an error defined as 
\begin{equation*}
 R_{\eps}^{(1)}= \int_{0}^{T}\int_{\Td}\int_{\Td}S_{\eps}[u_{\eps}]\left(S_{\eps}[\nabla m(u_{\eps})\cdot\nabla\varphi]-S_{\eps}[\nabla m(u_{\eps})]\cdot\nabla\varphi-\nabla m(u_{\eps})\cdot S_{\eps}[\nabla\varphi]\right)\diff x\diff y \diff t.
\end{equation*}
For the term $J_{1}^{(1)}$ we integrate by parts
$$
J_{1}^{(1)} = - \int_{0}^{T}\int_{\Td}\int_{\Td}S_{\eps}[u_{\eps}]\,S_{\eps}[m(u_{\eps})]\,\Delta \varphi\diff x\diff y\diff t
- \int_{0}^{T}\int_{\Td}\int_{\Td}S_{\eps}[m(u_{\eps})]\,S_{\eps}[\nabla u_{\eps}] \cdot \nabla \varphi\diff x\diff y\diff t
$$
For the first term we use \eqref{Sconv5} while for the second term we apply \eqref{Sconv1} to get
\begin{equation}\label{conv:J_{1}^{(1)}}
J_{1}^{(1)}\to -\frac{W}{2}\int_0^T \int_{\Td} \nabla m(u)\cdot \nabla u \, \Delta \varphi \diff x \diff t - \frac{W}{2} \int_0^T \int_{\Td} 
\nabla m(u) \cdot D^2u \nabla \varphi
\diff x \diff t.
\end{equation}
For the term $J_{2}^{(1)}$ we simply apply~\eqref{Sconv6} and obtain
\begin{equation}\label{conv:J_{2}^{(1)}}
J_{1}^{(2)}\to \f{W}{2}\int_{0}^{T}\int_{\Td}\nabla m(u) \cdot  D^{2}\varphi \nabla u \diff x\diff t.  
\end{equation}
Finally, concerning the term $R_{\eps}^{(1)}$, we will prove that it converges to 0. We first use \ref{propS_product_rule} to write
$$
R_{\eps}^{(1)}=\int_{0}^{T}\int_{\Td}\int_{\Td}\f{\omega_{\eps}(y)}{2|y|^{\alpha}\,\eps^{2-\alpha}}(u_{\eps}(x)-u_{\eps}(x-y))(\nabla m(u_{\eps})(x) -\nabla m(u_{\eps})(x-y)) \cdot (\nabla\varphi(x)-\nabla\varphi(x-y)).
$$
Integrating by parts we get that $R_{\eps}^{(1)}$ equals to:
\begin{align*}
&\int_{0}^{T}\int_{\Td}\int_{\Td}\f{\omega_{\eps}(y)}{2|y|^{\alpha}\,\eps^{2-\alpha}}(m(u_{\eps})(x)-m(u_{\eps})(x-y))(\nabla u_{\eps}(x) -\nabla u_{\eps}(x-y)) \cdot (\nabla\varphi(x)-\nabla\varphi(x-y))\\
&+ \int_{0}^{T}\int_{\Td}\int_{\Td}\f{\omega_{\eps}(y)}{2|y|^{\alpha}\,\eps^{2-\alpha}}(m(u_{\eps})(x)-m(u_{\eps})(x-y))( u_{\eps}(x) - u_{\eps}(x-y)) \, (\Delta\varphi(x)-\Delta\varphi(x-y)).
\end{align*}
We consider the first term which is harder and then we briefly comment how to handle the second one. We change variables to have
$$
\int_{0}^{T}\int_{\Td}\int_{\Td}\f{\omega(y)}{2|y|^{\alpha}\,\eps^{2}}(m(u_{\eps})(x)-m(u_{\eps})(x-\eps y))(\nabla u_{\eps}(x) -\nabla u_{\eps}(x- \eps y)) \cdot (\nabla\varphi(x)-\nabla\varphi(x-\eps y))
$$
Note that since $\{u_{\eps}\}$ is bounded, $|m(u_{\eps})(x) - m(u_{\eps})(x-y)|\leq C\,|u_{\eps}(x) - u_{\eps}(x-y)|$ so that due to \ref{item_est6} in Lemma \ref{lem:uniform_est_just_eps}, the sequence 
\begin{equation}\label{eq:energy_bound_for_m(u)}
\left\{\frac{\sqrt{\omega(y)}}{|y|^{\frac{\alpha}{2}}}\frac{|m(u_{\eps})(x) - m(u_{\eps})(x-\eps\,y)|}{\eps}\right\} \mbox{ is bounded in }
L^{\infty}(0,T; L^2(\Td\times\Td)).
\end{equation}
Moreover, $|\nabla \varphi(x) - \nabla \varphi(x- \eps\,y)|, \, |\Delta \varphi(x) - \Delta \varphi(x- \eps\,y)| \leq C\,\eps\, |y|$.
Hence, using \eqref{eq:energy_bound_for_m(u)} and \ref{item_est7} in Lemma \ref{lem:uniform_est_just_eps}, the term of interest is bounded by $C\,\varepsilon$ so it converges to 0 when $\eps \to 0$. Finally, for the second term in the definition of $R_{\eps}^{(1)}$, we argue in the same way using \eqref{eq:energy_bound_for_m(u)} and \ref{item_est6} (Lemma \ref{lem:uniform_est_just_eps}). \\

From \eqref{conv:J_{1}^{(1)}} and \eqref{conv:J_{2}^{(1)}}, using the identity
$$
\int_0^T \int_{\Td} \nabla m(u)\cdot \left[D^2 \varphi \nabla u - D^2 u \nabla \varphi \right] \diff x \diff t = \int_0^T \int_{\Td} \nabla m(u) \cdot \left[ \nabla u \, \Delta \varphi - \nabla \varphi \, \Delta u\right] \diff x \diff t,
$$
easily verifiable by integration by parts, we conclude that the limit can be simplified to
$$
I_1 \to -\frac{W}{2}\,\int_0^T \int_{\Td} \nabla m(u) \cdot \nabla \varphi \, \Delta u.  
$$

\underline{\textit{Step 2: Convergence of term $I_{2}$}}. We write
\begin{align*}
I_{2}&=\int_{0}^{T}\int_{\Td}\int_{\Td}S_{\eps}[u_{\eps}]\,S_{\eps}[m(u_{\eps})\Delta\varphi] \diff x\diff y\diff t\\
&=\int_{0}^{T}\int_{\Td}\int_{\Td}S_{\eps}[u_{\eps}]S_{\eps}[m(u_{\eps})]\Delta\varphi\diff x\diff y\diff t+\int_{0}^{T}\int_{\Td}\int_{\Td}S_{\eps}[u_{\eps}] \, m(u_{\eps}) S_{\eps}[\Delta\varphi]\diff x\diff y\diff t+R_{\eps}^{(2)}\\
&=J_{1}^{(2)}+J_{2}^{(2)}+R_{\eps}^{(2)},
\end{align*}
where $R_{\eps}^{(2)}$ equals
$$
R_{\eps}^{(2)} = \int_{0}^{T}\int_{\Td}\int_{\Td}S_{\eps}[u_{\eps}]\left(S_{\eps}[ m(u_{\eps}) \, \Delta\varphi]-S_{\eps}[ m(u_{\eps})]\, \Delta\varphi- m(u_{\eps})\, S_{\eps}[\Delta\varphi]\right)\diff x\diff y \diff t.
$$
Applying \eqref{Sconv5} and \eqref{Sconv4} we obtain that the first two terms converge to
\begin{multline*}
J_1^{(2)} + J_2^{(2)} \to \f{W}{2}\int_{0}^{T}\int_{\Td} \nabla m(u) \cdot \nabla u \, \Delta\varphi\diff x\diff t+\f{W}{2}\int_{0}^{T}\int_{\Td}m(u)\nabla u \cdot \nabla\Delta\varphi\diff x\diff t = \\ =
-\f{W}{2}\int_{0}^{T}\int_{\Td}m(u) \,\Delta u \,\Delta\varphi\diff x\diff t
\end{multline*}
so it remains to prove that $R_{\eps}^{(2)} \to 0$. In fact, using \ref{propS_product_rule} in Lemma \ref{lem:S_properties} we realize it equals
$$
\int_{0}^{T}\int_{\Td}\int_{\Td}\f{\omega_{\eps}(y)}{2|y|^{\alpha}\,\eps^{2-\alpha}}(m(u_{\eps})(x)-m(u_{\eps})(x-y))( u_{\eps}(x) - u_{\eps}(x-y)) \, (\Delta\varphi(x)-\Delta\varphi(x-y))
$$
which was already studied in the case of $R_{\eps}^{(1)}$. We obtain that
$$
I_2 \to -\f{W}{2}\int_{0}^{T}\int_{\Td}m(u) \,\Delta u \,\Delta\varphi\diff x\diff t
$$
and the proof is concluded.
\end{proof}

\section{Numerical simulations}\label{sect:numerics}
To illustrate the behavior of the solutions as $\eps\to 0$, we propose numerical simulations. To keep things simple, we assume that the mobility is constant, $m(u)=1$, and we use a smooth approximation of the double-well potential $F(u)= u^{2}(1-u)^{2}$ in Equations~\eqref{eq:CHE1}-\eqref{eq:CHE2} and~\eqref{eq:CH1}-\eqref{eq:CH2}. We refer to~\cite{MR4317553}, where authors investigate how the non-smooth nature of the double-well potential (such as the double-obstacle potential) generates sharp interfaces.\\

Our simulations are made in both one and two dimensions. The kernel reads
$$
J_{\eps}(x)=\f{1}{\eps^{d+2}}\exp\left(-\f{|x|^{2}}{\eps^{2}}\right).
$$

We consider three specific scenarios: $\eps=1$, $\eps=0.7$, and $\eps=0.1$. In the one-dimensional case, we superimpose the curve of the nonlocal solution (represented in red) onto the curve of the local solution (depicted in blue). We observe their evolution over time at $t=1$, $t=10$, and $t=100$.\\

In the two-dimensional case, the first row showcases the evolution of the nonlocal Cahn-Hilliard equation at $t=10$, $t=100$, and $t=1000$. The last row represents the evolution of the local equation.\\

The numerical implementation was made using Python, and numpy.fft, numpy.ifft to efficiently compute the Fourier and inverse Fourier transforms. The Fourier transform is particularly useful for terms like $\Delta^2 u$
and the convolution $\Delta (u-J\ast u)$. \\

Focusing on the numerical results, the observations made in both one and two dimensions are the same. As $\eps$ is sent to zero, the solutions of the non-local equation converge to those of the local equation. When $\eps$ is set to 0.1, the difference becomes invisible to the naked eye.

We opted for a random initial condition with low regularity. Within the simulations for both one-dimensional and two-dimensional scenarios with $\eps=1$, we observe two interesting aspects. First, the regularity of solutions to the nonlocal equation is lower than that of the local one and the initial condition requires a longer time to be smoothed. From a theoretical point of view, this can be explained by the $H^1$ regularity (a porous medium character) for the nonlocal equation compared to the $H^2$ regularity for the local equation. The second point of interest lies in the formation of an interface in a long time which is of different nature for both equations. For, $\eps=1$, at the final time step, the interface is sharp, unlike the diffuse interface observed for the local equation.

\begin{figure}[htbp]
  \centering
  \begin{minipage}{0.33\textwidth}
    \centering
    \includegraphics[width=\linewidth]{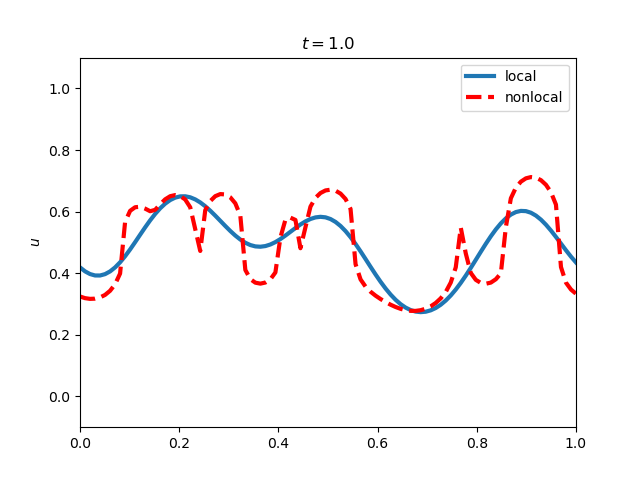}
    \caption*{$\varepsilon=1$, $t=1$}
    \label{fig:image1}
  \end{minipage}%
  \hfill
  \begin{minipage}{0.33\textwidth}
    \centering
    \includegraphics[width=\linewidth]{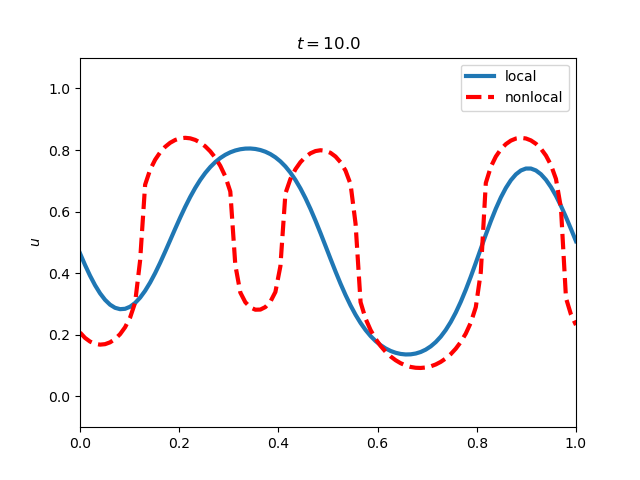}
    \caption*{$\varepsilon=1$, $t=10$}
    \label{fig:image2}
  \end{minipage}%
  \hfill
  \begin{minipage}{0.33\textwidth}
    \centering
    \includegraphics[width=\linewidth]{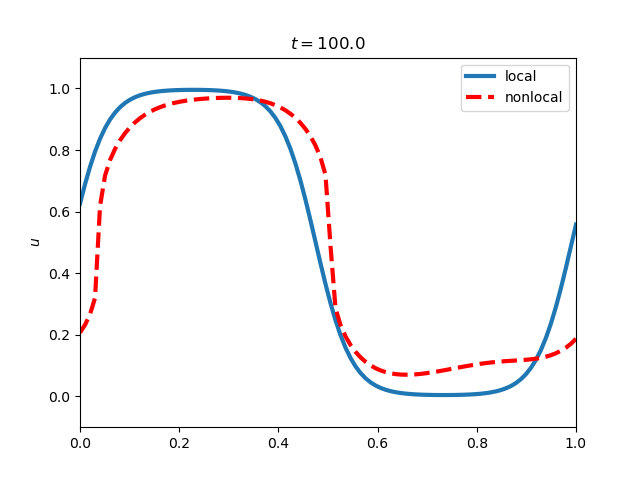}
    \caption*{$\varepsilon=1$, $t=100$}
    \label{fig:image3}
  \end{minipage}
  
  \vspace{1cm}
  
  \begin{minipage}{0.33\textwidth}
    \centering
    \includegraphics[width=\linewidth]{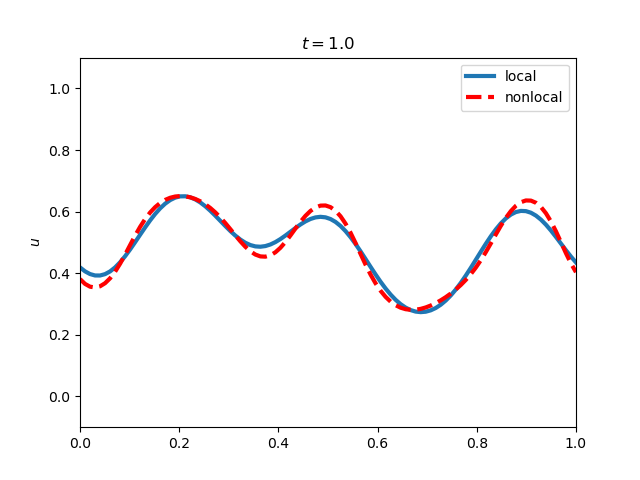}
    \caption*{$\varepsilon=0.7$, $t=1$}
    \label{fig:image4}
  \end{minipage}%
  \hfill
  \begin{minipage}{0.33\textwidth}
    \centering
    \includegraphics[width=\linewidth]{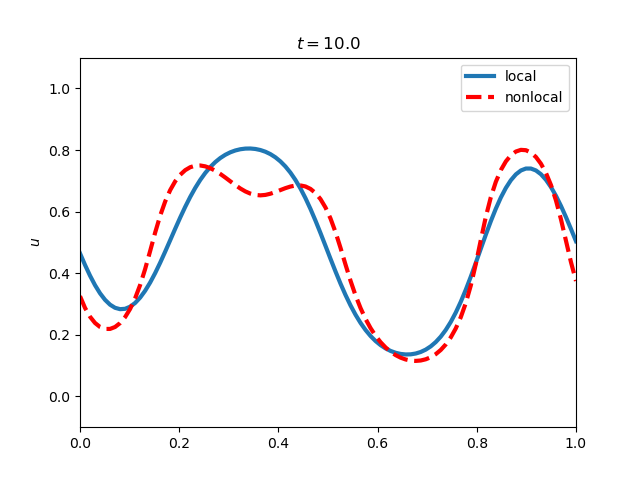}
    \caption*{$\varepsilon=0.7$, $t=10$}
    \label{fig:image5}
  \end{minipage}%
  \hfill
  \begin{minipage}{0.33\textwidth}
    \centering
    \includegraphics[width=\linewidth]{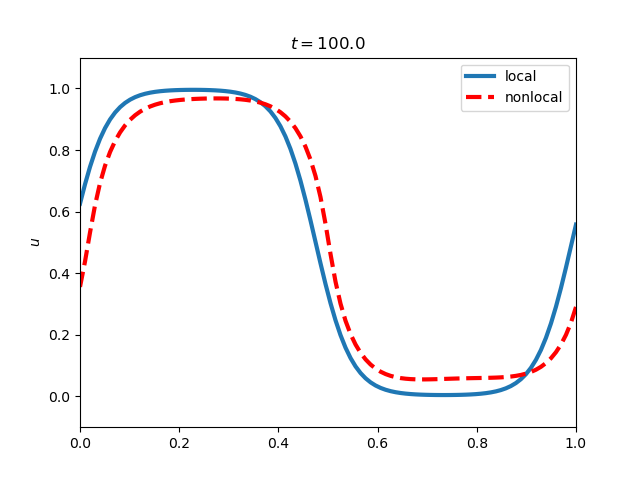}
    \caption*{$\varepsilon=0.7$, $t=100$}
    \label{fig:image6}
  \end{minipage}
  
  \vspace{1cm}
  
  \begin{minipage}{0.33\textwidth}
    \centering
    \includegraphics[width=\linewidth]{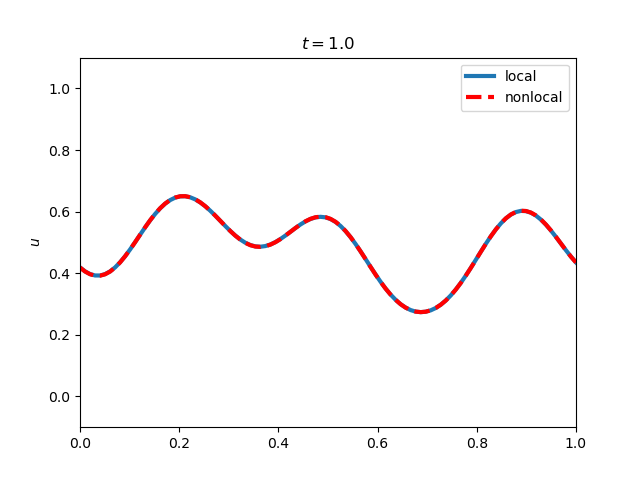}
    \caption*{$\varepsilon=0.1$, $t=1$}
    \label{fig:image7}
  \end{minipage}%
  \hfill
  \begin{minipage}{0.33\textwidth}
    \centering
    \includegraphics[width=\linewidth]{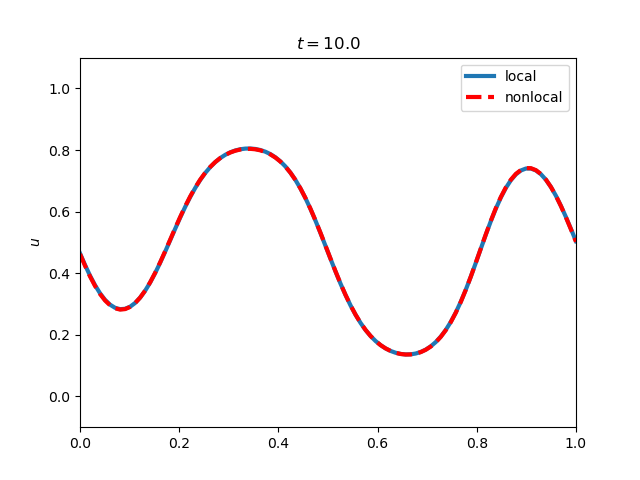}
    \caption*{$\varepsilon=0.1$, $t=10$}
    \label{fig:image8}
  \end{minipage}%
  \hfill
  \begin{minipage}{0.33\textwidth}
    \centering
    \includegraphics[width=\linewidth]{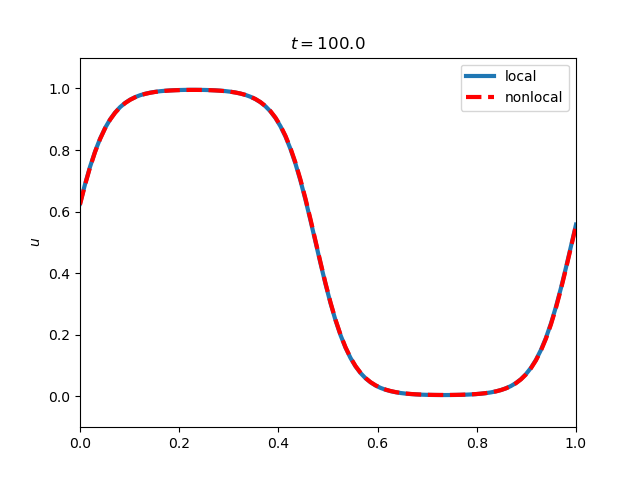}
    \caption*{$\varepsilon=0.1$, $t=100$}
    \label{fig:image9}
  \end{minipage}
\end{figure}

\begin{figure}[htbp]
  \centering
  \begin{minipage}{0.33\textwidth}
    \centering
    \includegraphics[width=\linewidth]{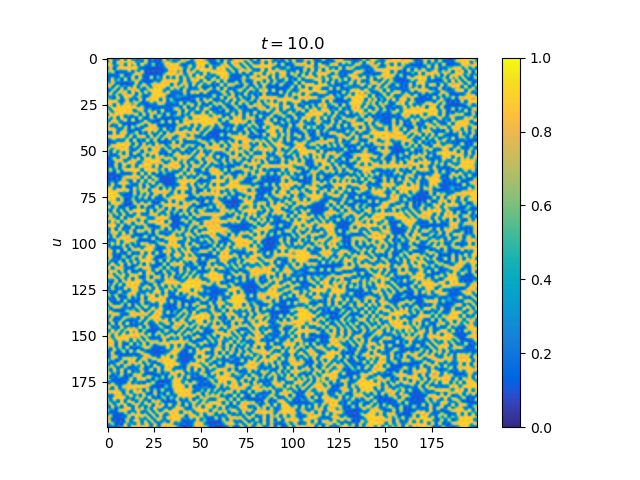}
    \caption*{$\varepsilon=1$, $t=10$}
  \end{minipage}%
  \hfill
  \begin{minipage}{0.33\textwidth}
    \centering
    \includegraphics[width=\linewidth]{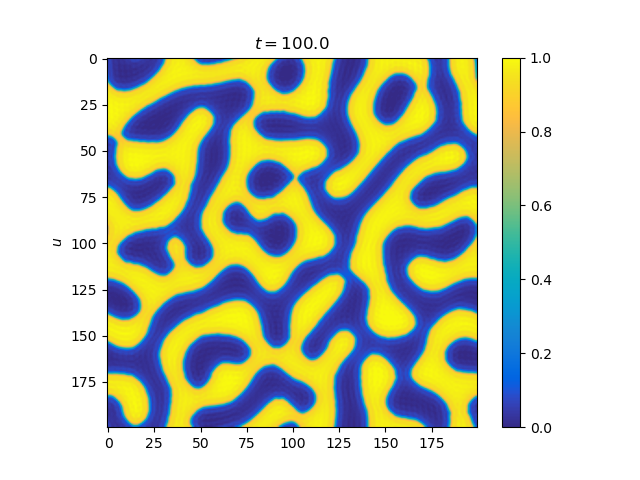}
    \caption*{$\varepsilon=1$, $t=100$}
  \end{minipage}%
  \hfill
  \begin{minipage}{0.33\textwidth}
    \centering
    \includegraphics[width=\linewidth]{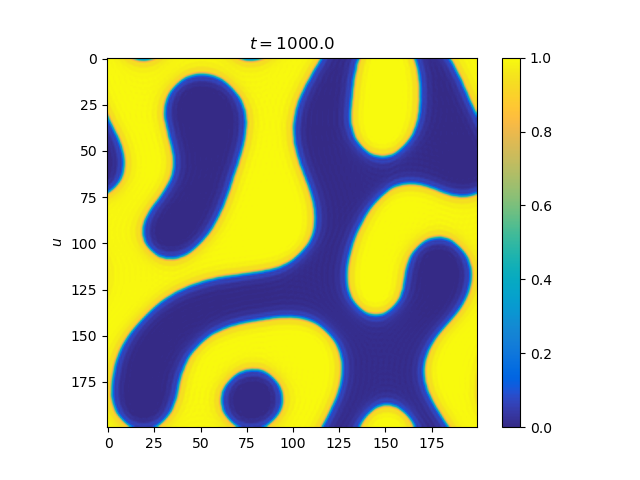}
    \caption*{$\varepsilon=1$, $t=1000$}
  \end{minipage}
  
  \vspace{1cm}  
  \begin{minipage}{0.33\textwidth}
    \centering
    \includegraphics[width=\linewidth]{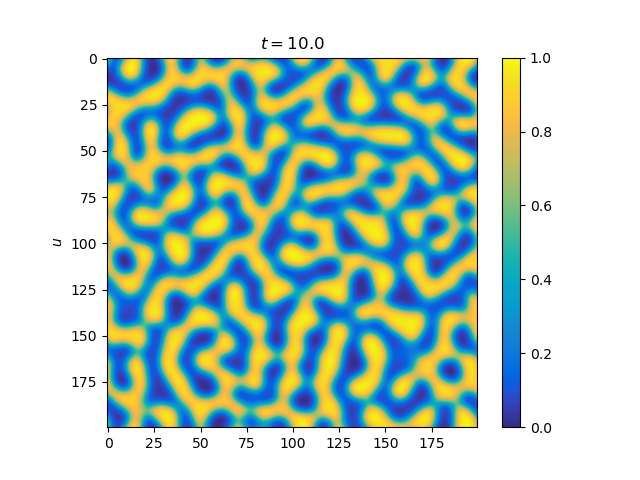}
    \caption*{local, $t=10$}
  \end{minipage}%
  \hfill
  \begin{minipage}{0.33\textwidth}
    \centering
    \includegraphics[width=\linewidth]{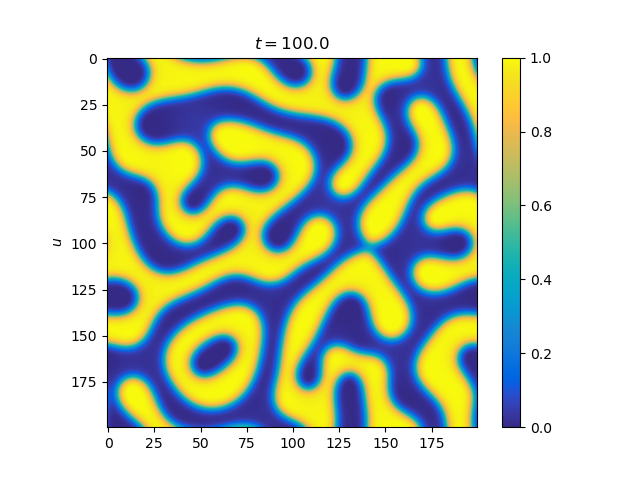}
    \caption*{local, $t=10$}
  \end{minipage}%
  \hfill
  \begin{minipage}{0.33\textwidth}
    \centering
    \includegraphics[width=\linewidth]{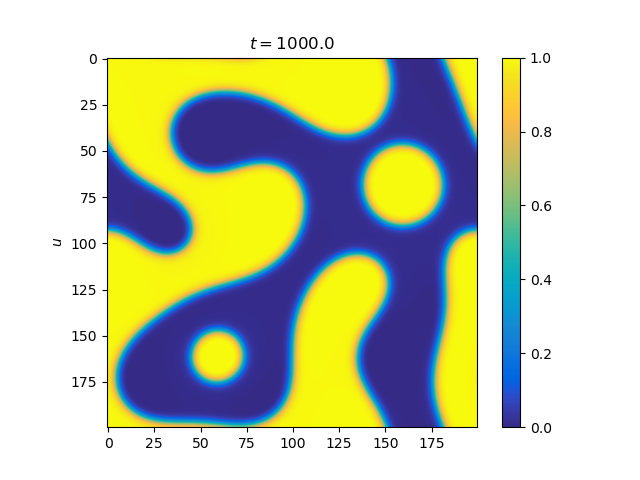}
    \caption*{local, $t=1000$}
  \end{minipage}
\end{figure}

\section{The nonlocal-to-local limit for the cell-cell adhesion model}\label{sect:cell_cell}
Here, we explain, using similar ideas, how to treat a recently introduced model in the theory of cell-cell adhesion from \cite[eq. (15)]{MR3948738} which also involves the mobility $m(u) = u\,(1-u)$. More precisely, we consider the PDE posed on $[0,T]\times \Td$ 
\begin{equation}\label{eq:PDE_cell_cell_adhesion_noeps!}
\partial_t u = \DIV (u \nabla u) - a\,\DIV( u\, (1-u)\, K[u]),
\end{equation}
where the parameter $a$ measures the relative cell-cell adhesion strength while $K$ is the nonlocal kernel $K$ defined as
$$
K[u] = \int_0^1 \int_{S^{d-1}} u(x + r\, \eta) \, V(r\,\eta)\, r^{d-1} \, \eta \diff S(\eta) \, \diff r.
$$
Here, $V$ is a smooth, radially symmetric and compactly supported kernel with $\supp V \subset \Td$ and $S^{d-1}$ is a unit sphere in $\R^d$. It is useful to observe that, with the assumptions on $V$, $K$ can be written as a convolution operator
$$
K[u](x) = \int_{\Td} u(x-y)\, V(y)\,\frac{y}{|y|} \diff y  = u \ast \left(V(\cdot)\, \left(\frac{\cdot}{|\cdot|}\right) \right)
$$

We now rescale the kernel by taking $V(x) := \frac{1}{\eps} \, \omega_{\eps}(x)$ in \eqref{eq:PDE_cell_cell_adhesion_noeps!} where $\omega_{\eps}(x) = \frac{1}{\eps^d} \omega\left(\frac{x}{\eps} \right)$ and $\omega: \R^d \to \R$ is a smooth, radially symmetric and compactly supported kernel. The scaling factor $\frac{1}{\eps}$ is chosen to obtain a meaningful PDE in the limit $\eps \to 0$ as explained below. Hence, we consider solutions to the PDE
\begin{equation}\label{eq:PDE_cell_cell_adhesion}
\partial_t \ueps = \DIV (\ueps \nabla \ueps) - a\,\DIV( \ueps\, (1-\ueps)\, K_{\eps}[\ueps])
\end{equation}
with 
\begin{equation}\label{eq:kernel_K_eps_def_cell_cell}
K_{\eps}[u] =  \frac{1}{\eps} \int_{\Td} u(x-y)\, \omega_{\eps}(y)\,\frac{y}{|y|} \diff y
\end{equation}
and we want to understand the limit $\eps \to 0$. To guess what is the expected PDE, let $\psi \in C_c^{\infty}((0,T)\times\Td)$. Then, by Taylor's expansion,
\begin{equation}\label{eq:Taylor_K_eps_cell_cell}
\begin{split}
K_{\eps}[\psi] &= \frac{1}{\eps} \int_{\Td} (\psi(x-y) - \psi(x))\, \omega_{\eps}(y)\, \frac{y}{|y|} \diff y = \\ &= \nabla \psi(x)\, \frac{1}{\eps} \int_{\Td}  \omega_{\eps}(y)\, |y| \diff y + R_{\eps} = \nabla \psi(x) \, \left(\int_{\Td} \omega(y)\,|y| \diff y \right) + R_{\eps},
\end{split}
\end{equation}
where $R_{\eps} \leq C(\omega)\, \|D^2 \psi\|_{\infty}\, \eps \to 0$ when $\eps \to 0$. Hence, we expect that the limiting PDE reads
\begin{equation}\label{eq:local_cell_cell_adhesion}
\partial_t u = \DIV (u \nabla u) - C_{\omega}\,a\,\DIV( u\, (1-u)\, \nabla u),
\end{equation}
where $C_{\omega}:= \int_{\Td} \omega(y)\,|y| \diff y$.\\

To have sufficient uniform a priori estimates with respect to $\eps \in (0,1)$, we have to assume that the parameter $a$ satisfies
\begin{equation}\label{ass:cell-cell:parameter_a}
|a|\, \sqrt{C} < 1,
\end{equation}
where $C$ is a constant in the inequality
\begin{equation}\label{eq:Poincare_rev_special_case_cell_cell_adh}
\int_{\Td} \int_{\Td} \frac{|f(x)- f(y)|^2}{\eps^{2}} \omega_{\varepsilon}(x-y) \diff x \diff y \leq C \, \|f\|_{H^1(\Td)}^2,
\end{equation}
which holds true by Lemma \ref{lem:inv_poincare_ineq}. The inequality \eqref{eq:Poincare_rev_special_case_cell_cell_adh} implies also the uniform bound on $K_{\eps}[u]$: using that $\int_{\Td} \omega_{\eps}(y) \frac{y}{|y|} \diff y =0$ we have
$$
K_{\eps}[u] = \frac{1}{\eps} \int_{\Td} u(x-y) \, \omega_{\eps}(y)\, \frac{y}{|y|} \diff y = \frac{1}{\eps} \int_{\Td} (u(x-y) - u(x))\, \omega_{\eps}(y)\, \frac{y}{|y|} \diff y
$$
so that by Jensen's inequality
\begin{equation}\label{eq:estimate_K_in_L2}
\| K_{\eps}[u] \|_{L^2(\Td)}^2 \leq \int_{\Td} \int_{\Td} \frac{(u(x-y) - u(x))^2}{\eps^2}\, \omega_{\eps}(y) \diff y \diff x \leq C\, \| \nabla u\|_{L^2(\Td)}^2.
\end{equation} 

For the nonlocal-to-local limit, the main idea is again the same as in the case of the Cahn-Hilliard system: the mobility $m(u) = u\,(1-u)$ ensures that the solution remains bounded so that we can easily handle the nonlinear terms. We also want to comment that in \cite{MR3948738} Authors derive also systems of PDEs of the form \eqref{eq:PDE_cell_cell_adhesion} but these are not treatable mathematically because, in full generality, there is currently no mathematical theory of such systems. Here we focus on a single equation.

\subsection{Existence of weak solutions to \eqref{eq:PDE_cell_cell_adhesion}}

\begin{definition}\label{def:weak_sol_cell_cell_eps}
We say that a function $u_{\eps}: [0,T]\times\Td \to [0,1]$ is a weak solution to \eqref{eq:PDE_cell_cell_adhesion} with initial condition $u^0$ if $\nabla u_{\eps} \in L^2((0,T)\times\Td)$ and for all $\psi \in C_c^{\infty}([0,T)\times \Td)$ we have
$$
\int_{\Td} u^0\, \psi(0,x) \diff x +\int_0^T \int_{\Td} u_{\eps}\, \partial_t \psi \diff x \diff t= \int_0^T \int_{\Td} (u_{\eps}\, \nabla \ueps - a\,(1-u_{\eps})\, u_{\eps} \, K_{\eps}[u_{\eps}]) \cdot \nabla \psi \diff x \diff t. 
$$
\end{definition}
\begin{lemma}\label{lem:existence_cell_cell_adh}
Let $0 \leq u^0 \leq 1$ and let the parameter $a$ satisfy \eqref{ass:cell-cell:parameter_a}. Then, there exists a weak solution to \eqref{eq:PDE_cell_cell_adhesion} with the initial condition $u^0$. Moreover, the following sequences are bounded uniformly with respect to $\eps \in (0,1)$:
\begin{enumerate}[label=(E\arabic*)]
\item\label{estim_eps_cell_eps_01} $\{ \ueps\}$ in $L^{\infty}((0,T)\times\Td)$ with $0 \leq u_{\eps} \leq 1$,
\item $\{\nabla \ueps \}$ in $L^2((0,T)\times\Td)$,
\item\label{estim_eps_cell_eps_time_der} $\{\partial_t \ueps \}$ in $L^2(0,T; H^{-1}(\Td))$.
\end{enumerate}
\end{lemma}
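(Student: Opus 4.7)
My plan is to construct weak solutions of \eqref{eq:PDE_cell_cell_adhesion} via a non-degenerate, truncated regularization for which existence is standard, then extract the three uniform bounds by testing with functions adapted to the structure of the equation. For $\delta \in (0,1)$, setting $Q(s) := \max(0,\min(s,1))$, I would first solve the auxiliary problem
\[
\partial_t \ueps^\delta = \DIV\bigl((Q(\ueps^\delta) + \delta)\, \nabla \ueps^\delta\bigr) - a\, \DIV\bigl(Q(\ueps^\delta)(1 - Q(\ueps^\delta))\, K_\eps[\ueps^\delta]\bigr),
\]
in which the diffusion coefficient lies in $[\delta, 1+\delta]$ and the drift coefficient is globally bounded and Lipschitz in $\ueps^\delta$. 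Existence of a strong solution $\ueps^\delta \in L^2(0,T;H^2(\Td)) \cap H^1(0,T;L^2(\Td))$ follows from a standard Schauder-type fixed-point argument (freezing the coefficients and $K_\eps[\ueps^\delta]$ and using the continuity of $K_\eps$ on $L^2$ from \eqref{eq:estimate_K_in_L2}).

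The three uniform bounds are then obtained as follows. For (E1), I test with $(\ueps^\delta - 1)_+$ and with $-(\ueps^\delta)_-$: on the sets $\{\ueps^\delta > 1\}$ and $\{\ueps^\delta < 0\}$ the truncated coefficient $Q(\ueps^\delta)(1 - Q(\ueps^\delta))$ vanishes identically, while the diffusive contribution is non-positive, forcing $\ueps^\delta \in [0,1]$ and hence $Q(\ueps^\delta) = \ueps^\delta$. For the key estimate (E2), I test with $\log(\ueps^\delta + \delta)$; the matching shift between the diffusion and the logarithm collapses the diffusive term to exactly $\int_{\Td} |\nabla \ueps^\delta|^2$, while the adhesion term (using $\ueps^\delta(1-\ueps^\delta) \leq \ueps^\delta + \delta$ and \eqref{eq:estimate_K_in_L2}) is bounded by $|a|\sqrt{C}\, \|\nabla \ueps^\delta\|_{L^2}^2$ and absorbed thanks to the assumption $|a|\sqrt{C} < 1$, giving
\[
\frac{d}{dt} \int_{\Td} G(\ueps^\delta) \diff x + (1 - |a|\sqrt{C}) \int_{\Td} |\nabla \ueps^\delta|^2 \diff x \leq 0,
\]
where $G$ is a primitive of $s \mapsto \log(s+\delta)$. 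Since $G$ is uniformly bounded on $[0,1]$, integrating in time yields (E2) uniformly in $\delta$. The bound (E3) then follows directly from the PDE, since (E1)--(E2) and \eqref{eq:estimate_K_in_L2} control the flux $\ueps^\delta \nabla \ueps^\delta - a \ueps^\delta(1-\ueps^\delta) K_\eps[\ueps^\delta]$ in $L^2((0,T)\times\Td)$.

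Passing to the limit $\delta \to 0$ is then routine: (E2)--(E3) and the Aubin-Lions lemma produce strong convergence $\ueps^\delta \to \ueps$ in $L^2((0,T)\times\Td)$ and a.e., which preserves the range $[0,1]$ and allows identification of $\ueps^\delta \nabla \ueps^\delta \rightharpoonup \ueps \nabla \ueps$ (strong-weak product) and $\ueps^\delta(1-\ueps^\delta) K_\eps[\ueps^\delta] \to \ueps(1-\ueps) K_\eps[\ueps]$ strongly in $L^2$ by dominated convergence and continuity of the linear operator $K_\eps$ on $L^2$. The main delicate point I anticipate is the rigorous justification of the entropy computation leading to (E2): this relies crucially on the strict positivity $\ueps^\delta + \delta \geq \delta$ and the strong regularity of $\ueps^\delta$ provided by the preliminary uniformly parabolic step, which together make $\log(\ueps^\delta + \delta) \in L^2(0,T;H^1(\Td))$ an admissible test function in the weak formulation.
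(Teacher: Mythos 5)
Your proposal is correct and follows the same overall strategy as the paper: regularize, prove $0\le u\le 1$ by exploiting that the adhesion flux vanishes at $u=0$ and $u=1$, obtain \ref{estim_eps_cell_eps_01}--\ref{estim_eps_cell_eps_time_der} from an entropy estimate with a logarithmic test function in which the adhesion term is absorbed via \eqref{eq:estimate_K_in_L2} and the smallness condition \eqref{ass:cell-cell:parameter_a}, and conclude by Aubin--Lions. The only genuine difference is the choice of regularization. The paper keeps the degenerate diffusion $\DIV(u_\delta\nabla u_\delta)$ intact and instead mollifies the nonlocal flux, replacing $u_\delta K_\eps[u_\delta]$ by $(u_\delta K_\eps[u_\delta])\ast\varphi_\delta$, and then tests with $\log u_\delta$; you truncate the coefficients and add $\delta$ to the diffusion, which makes the approximate problem uniformly parabolic, and test with $\log(u+\delta)$. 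Your version is arguably more careful at the degeneracy: $\log(u+\delta)$ is a bounded, $L^2(0,T;H^1)$ test function, whereas $\log u_\delta$ requires an implicit approximation near $u_\delta=0$ that the paper does not spell out; and your truncation makes the $L^\infty$ bound transparent since $Q(u)(1-Q(u))$ vanishes identically outside $[0,1]$. The paper's mollified flux, on the other hand, makes the fixed-point map smoother. One small overstatement on your side: with only bounded measurable frozen coefficients $Q(v)+\delta$, the linear parabolic theory gives $u\in L^2(0,T;H^1)\cap H^1(0,T;H^{-1})$ rather than $L^2(0,T;H^2)$; this is harmless, since that regularity class already suffices to justify $\log(u+\delta)$ as a test function via the standard chain rule for such functions.
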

\begin{proof}
\textit{\underline{Step 1: An auxiliary problem.}} For fixed $\eps \in (0,1)$ we consider an auxiliary problem
\begin{equation}\label{eq:auxillary_pr_cell-cell}
\partial_t u_{\delta} = \DIV (u_{\delta} \nabla u_{\delta}) - a\,\DIV( (1-u_{\delta})\, (u_{\delta}\, K_{\eps}[u_{\delta}])\ast \varphi_{\delta})
\end{equation}
where $\{\varphi_{\delta}\}_{\delta\in(0,1)}$ is a standard mollifying sequence. A solution to \eqref{eq:auxillary_pr_cell-cell} with regularity $u_{\delta} \in L^2(0,T; H^1(\Td))$ can be obtained by standard methods, for example by the Schauder's fixed point theorem. We want to pass with $\delta \to 0$ in \eqref{eq:auxillary_pr_cell-cell}.\\

\textit{\underline{Step 2: Uniform bounds with respect to $\delta$ and $\eps$.}} We claim that the following sequences are uniformly bounded with respect to $\delta, \eps \in (0,1)$:
\begin{enumerate}[label=(F\arabic*)]
\item\label{est:u_0_1_delta_before_eps} $\{ u_{\delta}\}$ in $L^{\infty}((0,T)\times\Td)$ with $0 \leq u_{\delta} \leq 1$,
\item\label{est:delta_gradient} $\{\nabla u_{\delta}\}$ in $L^2((0,T)\times\Td)$,
\item\label{est:time_der_delta} $\{\partial_t u_{\delta}\}$ in $L^2(0,T; H^{-1}(\Td))$.
\end{enumerate}
Let $p$ be a function such that $p(s) = 0$ for $s<1$ and $p'(s) \geq 0$ for $s \geq 1$. Let $P(s) = \int_1^s p(r) \diff r$. Integrating in space we deduce
$$
\partial_t \int_{\Td} P(u_{\delta}) \diff x = - \int_{\Td} u_{\delta} \, p'(u_{\delta}) \, |\nabla u_{\delta}|^2 \diff x - a\, \int_{\Td} p(u_{\delta})\, \DIV( (1-u_{\delta})\, (u_{\delta}\, K_{\eps}[u_{\delta}])\ast \varphi_{\delta}) \diff x.
$$
Since $ \int_{\Td} u_{\delta} \, p'(u_{\delta}) \, |\nabla u_{\delta}|^2 \diff x \geq 0$ and $u_{\delta} \in L^2(0,T; H^1(\Td))$, we can approximate function $\mbox{sign}_+(u-1) = \mathds{1}_{u>1}$ with $p$ and deduce
$$
\partial_t \int_{\Td} |u_{\delta}-1|_+ \diff x \leq - a\, \int_{\Td} \mbox{sign}_+(u_{\delta}-1)\, \DIV( (1-u_{\delta} ) \, (u_{\delta}\, K_{\eps}[u_{\delta}])\ast \varphi_{\delta}) \diff x,
$$
where $|u-1|_+ = (u-1)\,\mathds{1}_{u>1}$. The integrand on the (RHS) can be written again in the divergence form as $-\DIV( (1-u_{\delta} )_+ \, (u_{\delta}\, K_{\eps}[u_{\delta}])\ast \varphi_{\delta})$ so the respective integral vanish and we conclude that $u_{\delta} \leq 1$. In a similar way we prove that $u_{\delta} \geq 0$. \\

\noindent Next, we multiply by $\log u_{\delta}$ and integrate by parts to get
\begin{equation}\label{eq:energy_estimate_cell_cell_adhesion_delta}
\begin{split}
\partial_t \int_{\Td} u_{\delta} \log u_{\delta} \diff x + &\int_{\Td} |\nabla u_{\delta}|^2 \diff x \leq |a| \, \int_{\Td} |\nabla u_{\delta}| \, |u_{\delta}\, K_{\eps}[u_{\delta}]|\ast \varphi_{\delta} \diff x  \leq \\  \phantom{\int_{\Td}} &\leq |a|\, \| \nabla u_{\delta} \|_{L^2(\Td)} \, \|u_{\delta}\, K_{\eps}[u_{\delta}] \|_{L^2(\Td)} \leq 
|a|\, \| \nabla u_{\delta} \|_{L^2(\Td)} \, \|K_{\eps}[u_{\delta}] \|_{L^2(\Td)}.
\end{split}
\end{equation}
Using \eqref{eq:estimate_K_in_L2}, we obtain
$$
\partial_t \int_{\Td} u_{\delta} \log u_{\delta} \diff x + \int_{\Td} |\nabla u_{\delta}|^2 \diff x \leq |a| \, \int_{\Td} |\nabla u_{\delta}| \, |u_{\delta}\, K_{\eps}[u_{\delta}]|\ast \varphi_{\delta} \diff x  \leq |a|\, \sqrt{C} \,  \| \nabla u_{\delta}\|_{L^2(\Td)}^2.
$$
Hence, since $|a|\, \sqrt{C}<1$, we obtain \ref{est:delta_gradient}.\\

Finally, multiplying \eqref{eq:auxillary_pr_cell-cell} by $\psi \in C^{\infty}_c((0,T)\times \Td)$ and using that $0 \leq u_{\delta} \leq 1$ as well as \eqref{eq:estimate_K_in_L2} we have
\begin{align*}
\left|\int_0^T \int_{\Td} u_{\delta}\, \partial_t \psi \diff x \diff t \right| &\leq \left|\int_0^T \int_{\Td} u_{\delta}\, \nabla u_{\delta}\,  \nabla \psi \diff x \diff t \right| + \left|\int_0^T \int_{\Td}(1-u_{\delta})\, (u_{\delta}\, K_{\eps}[u_{\delta}])\ast \varphi_{\delta} \,  \nabla \psi \diff x \diff t \right| \\
&\leq \left(\|\nabla u_{\delta} \|_{L^2((0,T)\times\Td)} + \| K_{\eps}[u_{\delta}] \|_{L^2((0,T)\times\Td)} \right) \, \| \nabla \psi \|_{L^2((0,T)\times\Td)}   \phantom{\int_{\Td}}\\
& \leq (1+\sqrt{C})\, \|\nabla u_{\delta} \|_{L^2((0,T)\times\Td)} \, \| \nabla \psi \|_{L^2((0,T)\times\Td)},  \phantom{\int_{\Td}}
\end{align*}
so that \ref{est:time_der_delta} follows by \ref{est:delta_gradient}. \\

\textit{\underline{Step 3: The limit $\delta \to 0$ and the estimates \ref{estim_eps_cell_eps_01}--\ref{estim_eps_cell_eps_time_der}.}} By the Aubin-Lions lemma, the Banach-Alaoglu theorem and interpolation in $L^p$ spaces, there exists a subsequence such that 
\begin{equation}\label{eq:convergence_delta_cell_cell}
\begin{split}
&u_{\delta} \to u \mbox{ strongly in } L^p((0,T)\times \Td) \mbox{ for } p \in [1, \infty),\\
&\nabla u_{\delta} \rightharpoonup u \mbox{ weakly in } L^2((0,T)\times\Td),\\
&\partial_t u_{\delta} \rightharpoonup \partial_t u \mbox{ weakly in } L^2(0,T; H^{-1}(\Td)).
\end{split}
\end{equation}
This is sufficient to pass to the limit $\delta\to0$ in the expression
$$
\int_{\Td} u^0\, \psi(0,x) \diff x +\int_0^T \int_{\Td} u_{\delta}\, \partial_t \psi \diff x \diff t= \int_0^T \int_{\Td} (u_{\delta}\, \nabla u_{\delta} - a\,(1-u_{\delta})\, (u_{\delta} \, K_{\eps}[u_{\delta}])\ast\varphi_{\delta}) \cdot \nabla \psi \diff x \diff t 
$$
(keep in mind that $\eps$ is fixed!) for $\psi \in C^{\infty}_c([0,T); \Td)$ so that $u$ satisfies Definition \ref{def:weak_sol_cell_cell_eps}. Finally, since bounds \ref{est:u_0_1_delta_before_eps}--\ref{est:time_der_delta} are preserved along the limit \eqref{eq:convergence_delta_cell_cell}, \ref{estim_eps_cell_eps_01}--\ref{estim_eps_cell_eps_time_der} follow.

\end{proof}

\subsection{Passing to the limit $\eps\to0$ in \eqref{eq:PDE_cell_cell_adhesion}.}

\begin{theorem}
Let $0 \leq u^0 \leq 1$ and let the parameter $a$ satisfy \eqref{ass:cell-cell:parameter_a}. Let $\{u_{\eps}\}_{\eps \in (0,1)}$ be a sequence of solutions to \eqref{eq:PDE_cell_cell_adhesion} constructed in Lemma \ref{lem:existence_cell_cell_adh} satisfying uniform bounds \ref{estim_eps_cell_eps_01}--\ref{estim_eps_cell_eps_time_der}. Then, there exists a subsequence (not relabelled) such that
\begin{equation}\label{eq:convergence_eps_cell_cell}
\begin{split}
&u_{\eps} \to u \mbox{ strongly in } L^p((0,T)\times \Td) \mbox{ for } p \in [1, \infty),\\
&\nabla u_{\eps} \rightharpoonup u \mbox{ weakly in } L^2((0,T)\times\Td),
\end{split}
\end{equation}
where $u$ satisfies 
$$
\int_{\Td} u^0\, \psi(0,x) \diff x +\int_0^T \int_{\Td} u\, \partial_t \psi \diff x \diff t= \int_0^T \int_{\Td} (u\, \nabla u - C_{\omega}\,a\,(1-u)\, u \, \nabla u) \cdot \nabla \psi \diff x \diff t, 
$$
for all $\psi \in C^{\infty}_c([0,T); \Td)$, i.e. $u$ is a distributional solution of \eqref{eq:local_cell_cell_adhesion}.
\end{theorem}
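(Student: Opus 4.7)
The plan is to exploit the uniform estimates from Lemma~\ref{lem:existence_cell_cell_adh} via an Aubin-Lions compactness argument, identify the weak limit of the nonlocal operator $K_{\eps}[u_{\eps}]$ through a duality computation analogous to the Taylor expansion in \eqref{eq:Taylor_K_eps_cell_cell}, and finally pass to the limit in the weak formulation of Definition~\ref{def:weak_sol_cell_cell_eps}.

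First I would extract the subsequences. The bounds \ref{estim_eps_cell_eps_01}--\ref{estim_eps_cell_eps_time_der} combined with the compact embedding $H^{1}(\Td)\hookrightarrow L^{2}(\Td)\hookrightarrow H^{-1}(\Td)$ place $\{u_{\eps}\}$ in a compact set of $L^{2}((0,T)\times\Td)$ by Aubin-Lions, yielding strong and a.e.\ convergence $u_{\eps}\to u$. Interpolating against the $L^{\infty}$ bound upgrades this to strong convergence in every $L^{p}((0,T)\times\Td)$ with $p\in[1,\infty)$, and Banach-Alaoglu applied to \ref{estim_eps_cell_eps_01}--\ref{estim_eps_cell_eps_time_der} gives $\nabla u_{\eps}\rightharpoonup\nabla u$ in $L^{2}$ and $\partial_{t}u_{\eps}\rightharpoonup\partial_{t}u$ in $L^{2}(0,T;H^{-1}(\Td))$. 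The pointwise bounds $0\le u\le 1$ persist in the limit.

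The core step, and the main obstacle, is identifying the weak limit of $K_{\eps}[u_{\eps}]$ in $L^{2}((0,T)\times\Td)$. By \eqref{eq:estimate_K_in_L2} combined with the gradient bound, $\{K_{\eps}[u_{\eps}]\}$ is bounded in $L^{2}$ and admits a weak limit $V$ along a subsequence. To identify $V$, I would test against $\phi\in C_{c}^{\infty}((0,T)\times\Td;\R^{d})$: after the change of variable $x\mapsto x+y$ and using $\int_{\Td}\omega_{\eps}(y)\tfrac{y}{|y|}\diff y=0$ (radial symmetry of $\omega$), the pairing becomes
\begin{equation*}
\int_{0}^{T}\!\int_{\Td}K_{\eps}[u_{\eps}]\cdot\phi\,\diff x\diff t = \int_{0}^{T}\!\int_{\Td} u_{\eps}(z)\,\frac{1}{\eps}\!\int_{\Td}\omega_{\eps}(y)\,\frac{y}{|y|}\cdot\bigl(\phi(z+y)-\phi(z)\bigr)\diff y\diff z\diff t.
\end{equation*}
A second-order Taylor expansion of $\phi$ and the identity $\tfrac{1}{\eps}\int\omega_{\eps}(y)\tfrac{y_{i}y_{j}}{|y|}\diff y=\int\omega(z)\tfrac{z_{i}z_{j}}{|z|}\diff z$ show that the inner expression converges uniformly in $z$ to a constant multiple of $\DIV\phi(z)$, with a remainder of order $\eps\|D^{2}\phi\|_{\infty}$. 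Combining with the strong $L^{2}$ convergence of $u_{\eps}$ and integrating by parts identifies $V = C_{\omega}\nabla u$ in the normalization of Section~\ref{sect:cell_cell}.

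Finally, I would pass to the limit in the weak formulation. The time-derivative term $\int u_{\eps}\partial_{t}\psi$ and the initial-datum contribution pass via strong $L^{1}$ convergence. For the diffusion $\int u_{\eps}\nabla u_{\eps}\cdot\nabla\psi$, the simplest argument is to rewrite it as $-\tfrac{1}{2}\int u_{\eps}^{2}\Delta\psi$ and use strong $L^{2}$ convergence of $u_{\eps}$. For the nonlocal drift $a(1-u_{\eps})u_{\eps}K_{\eps}[u_{\eps}]\cdot\nabla\psi$, the factor $(1-u_{\eps})u_{\eps}\nabla\psi\to(1-u)u\nabla\psi$ strongly in $L^{2}$ (using \ref{estim_eps_cell_eps_01} and the strong $L^{p}$ convergence), while $K_{\eps}[u_{\eps}]\rightharpoonup C_{\omega}\nabla u$ weakly in $L^{2}$ by the previous step; the standard strong-weak pairing gives the stated limit. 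The only delicate point is the weak limit identification of $K_{\eps}[u_{\eps}]$ — every other step is abstract compactness together with the continuity of products of strongly and weakly convergent sequences.
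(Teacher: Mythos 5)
Your proposal is correct and follows essentially the same route as the paper: Aubin--Lions plus interpolation for the strong $L^p$ convergence, weak $L^2$ compactness of $K_{\eps}[u_{\eps}]$ from \eqref{eq:estimate_K_in_L2}, identification of its weak limit by moving $K_{\eps}$ onto the test function (your change of variables $x\mapsto x+y$ combined with $\int\omega_{\eps}(y)\frac{y}{|y|}\diff y=0$ is exactly the paper's antisymmetry identity \eqref{eq:identity_to_identify_xi_before_passing_eps_cell_cell} followed by the Taylor expansion \eqref{eq:Taylor_K_eps_cell_cell}), and a strong--weak pairing for the nonlinear product. The only cosmetic difference is your rewriting of the diffusion term as $-\tfrac12\int u_{\eps}^{2}\Delta\psi$, where the paper implicitly uses the same strong--weak pairing.
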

\begin{proof}
As in the proof of Lemma \ref{lem:existence_cell_cell_adh}, by the Aubin-Lions lemma, the Banach-Alaoglu theorem, interpolation in $L^p$ spaces and the bounds \ref{estim_eps_cell_eps_01}--\ref{estim_eps_cell_eps_time_der}, there exists a subsequence as in \eqref{eq:convergence_eps_cell_cell}.\\

We want to pass to the limit $\eps \to 0$ in Definition \ref{def:weak_sol_cell_cell_eps}. The only difficulty is to justify the limit in the term
\begin{equation}\label{eq:passing_to_the_limit_difficult_term}
\int_0^T \int_{\Td} (1-u_{\eps})\, u_{\eps} \, K_{\eps}[u_{\eps}] \cdot \nabla \psi \diff x \diff t
\end{equation}
due to the singularity $\frac{1}{\eps}$ in the nonlocal operator $K_{\eps}[u_{\eps}]$. By the bound \eqref{eq:estimate_K_in_L2}, there exists $\xi$ such that
$$
K_{\eps}[u_{\eps}] \rightharpoonup \xi \mbox{ weakly in } L^2((0,T)\times \Td).
$$
To identify $\xi$, let $\psi \in C_c^{\infty}((0,T)\times\Td)$. Since $K_{\eps}$ is a convolution operator with an antisymmetric kernel (see \ref{eq:kernel_K_eps_def_cell_cell})
\begin{equation}\label{eq:identity_to_identify_xi_before_passing_eps_cell_cell}
\int_0^T \int_{\Td} K_{\eps}[u_{\eps}] \, \psi \diff x \diff t = - \int_0^T \int_{\Td} u_{\eps} \, K_{\eps}[\psi] \diff x \diff t.
\end{equation}
By \eqref{eq:Taylor_K_eps_cell_cell}, $K_{\eps}[\psi] \to C_{\omega}\,\nabla \psi$ so that passing to the limit in \eqref{eq:identity_to_identify_xi_before_passing_eps_cell_cell} we obtain
$$
\int_0^T \int_{\Td} \xi \, \psi \diff x \diff t = - C_{\omega}\, \int_0^T \int_{\Td} u\, \nabla \psi  \diff x \diff t
$$
which implies $\xi = C_{\omega}\, \nabla u$. Furthermore, by \eqref{eq:convergence_eps_cell_cell},
$$
u_{\eps}\, (1-u_{\eps}) \to u\,(1-u) \mbox{ strongly in }  L^2((0,T)\times\Td)
$$
so that we can pass to the limit in \eqref{eq:passing_to_the_limit_difficult_term} 
$$
\lim_{\eps\to 0} \int_0^T \int_{\Td} (1-u_{\eps})\, u_{\eps} \, K_{\eps}[u_{\eps}] \cdot \nabla \psi \diff x \diff t = C_{\omega} \int_0^T \int_{\Td} (1-u)\,u\, \nabla u \cdot \nabla \psi \diff x \diff t 
$$
and the proof is concluded.
\end{proof}
\section*{Acknowledgements}
Jakub Skrzeczkowski was supported by the Advanced Grant Nonlocal-CPD (Nonlocal PDEs for Complex Particle Dynamics: Phase Transitions, Patterns and Synchronization) of the European Research Council Executive Agency (ERC) under the European Union’s Horizon 2020 research and Innovation programme (grant agreement No. 883363).

\appendix
\section{Results from classical analysis}

\begin{lemma}\label{lem:diff_quot_strong_conv}
Let $\{u_\eps\}_{\eps}$ be a sequence strongly compact in $L^2(0,T; H^1(\Td))$. Then, 
$$
\frac{u_{\eps}(t,x-\eps y) - u_{\eps}(t,x)}{\eps|y|} \to - \nabla u(t,x) \cdot \f{y}{|y|} \mbox{ strongly in } L^{\infty}_{y}(\Td; L^2_{(t,x)}((0,T)\times\Td)).
$$
\end{lemma}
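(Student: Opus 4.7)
The plan is to separate the sequence from its limit and then reduce the question to the standard continuity of translations in $L^2$. By the assumed strong compactness, up to extraction we have $u_\eps \to u$ in $L^2(0,T; H^1(\Td))$. Adding and subtracting $u$ and its difference quotient, write
$$
\frac{u_\eps(t, x-\eps y) - u_\eps(t,x)}{\eps |y|} + \nabla u(t,x)\cdot \frac{y}{|y|} \;=\; A_\eps(t,x,y) + B_\eps(t,x,y),
$$
where $A_\eps$ is the difference quotient applied to $u_\eps - u$, and $B_\eps$ is the analogous expression with $u_\eps$ replaced by $u$. It suffices to show that each piece tends to $0$ in $L^{\infty}_{y}(\Td; L^{2}_{t,x}((0,T)\times \Td))$.

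For $A_\eps$, I would invoke the classical translation estimate $\|v(\cdot - h) - v\|_{L^2(\Td)} \le |h|\,\|\nabla v\|_{L^2(\Td)}$ valid for $v \in H^1(\Td)$, applied to $v = u_\eps(t,\cdot) - u(t,\cdot)$ with $h = -\eps y$. After integrating in $t$ this gives
$$
\sup_{y\in\Td}\|A_\eps(\cdot,\cdot,y)\|_{L^2((0,T)\times\Td)} \;\le\; \|\nabla(u_\eps - u)\|_{L^2((0,T)\times\Td)} \;\xrightarrow[\eps\to 0]{}\; 0,
$$
and the supremum in $y$ is built into the bound.

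For $B_\eps$, I would use the one-dimensional fundamental theorem of calculus along lines (valid for a.e.\ $(t,x)$ when $u \in L^2(0,T;H^1(\Td))$, by Fubini):
$$
u(t,x-\eps y) - u(t,x) \;=\; -\eps\int_0^1 \nabla u(t, x-s\eps y)\cdot y\,\diff s,
$$
which rewrites $B_\eps$ as $\int_0^1 \bigl[\nabla u(t,x) - \nabla u(t, x-s\eps y)\bigr] \cdot \tfrac{y}{|y|}\, \diff s$. Jensen's inequality and Fubini then yield
$$
\|B_\eps(\cdot,\cdot,y)\|^2_{L^2((0,T)\times\Td)} \;\le\; \int_0^1 \|\nabla u - \nabla u(\cdot, \cdot - s\eps y)\|^2_{L^2((0,T)\times\Td)}\,\diff s.
$$
Since $|s\eps y| \le C\,\eps$ uniformly in $(s,y) \in [0,1]\times\Td$ (as $\Td$ is bounded), and translations are uniformly continuous in $L^2$ near the origin, the right-hand side vanishes as $\eps\to 0$ uniformly in $y\in\Td$.

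The only mildly subtle point is precisely this uniformity with respect to $y$, which is resolved by the compactness of $\Td$ combined with the uniform $L^2$-continuity of translations on compact sets of shifts; everything else is a textbook application of Sobolev-space facts.
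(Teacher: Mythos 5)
Your proof is correct, but it follows a genuinely different route from the paper's. The paper applies the fundamental theorem of calculus directly to $u_{\eps}$, writing the difference quotient as $-\int_0^1 \nabla u_{\eps}(t,x-\eps s y)\cdot \f{y}{|y|}\diff s$; the error then splits into $\|(\nabla u_{\eps}-\nabla u)\cdot \f{y}{|y|}\|_{L^2_{t,x}}$ plus a term involving translates of $\nabla u_{\eps}$ itself, and controlling the latter uniformly in $\eps$ forces the paper to invoke the Fr\'echet--Kolmogorov theorem to extract a single modulus of continuity $\gamma$ valid for the whole compact family $\{\nabla u_{\eps}\}$, giving the bound $\gamma(\eps s |y|)\le \gamma(\eps)$. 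You instead subtract the limit first: the $u_{\eps}-u$ piece is annihilated by the elementary translation estimate $\|v(\cdot-h)-v\|_{L^2}\le |h|\,\|\nabla v\|_{L^2}$ together with the strong convergence $\nabla u_{\eps}\to\nabla u$ in $L^2$, and the remaining piece involves only translates of the \emph{fixed} function $\nabla u$, for which continuity of translations in $L^2$ is a textbook fact requiring no compactness argument. Both decompositions deliver the same uniformity in $y$ (via boundedness of $\Td$), but yours avoids Fr\'echet--Kolmogorov entirely and is arguably the cleaner argument; the paper's version has the minor advantage of never needing to name the limit until the very end. One small point worth making explicit in a final write-up: the lemma's hypothesis is strong \emph{compactness}, so the convergence $u_{\eps}\to u$ holds only along a subsequence, which is exactly how the lemma is used in the body of the paper --- your "up to extraction" remark covers this.
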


\begin{proof}
We write 
\begin{align*}
\frac{u_{\eps}(t,x-\eps y) - u_{\eps}(t,x)}{\eps|y|}& = -\int_{0}^{1}\nabla u_{\eps}(t,x-\eps s y) \cdot \f{y}{|y|} \diff s\\
&= -\nabla u_{\eps}(t,x) \f{y}{|y|} - \int_{0}^{1}[\nabla u_{\eps}(t,x-\eps s y) - \nabla u_{\eps}(t,x)] \cdot \f{y}{|y|} \diff s.
\end{align*}

Therefore 
\begin{align*}
\left\|\frac{u_{\eps}(t,x-\eps y) - u_{\eps}(t,x)}{\eps|y|}+\nabla u(t,x) \f{y}{|y|}\right\|_{L^{2}_{t,x}}& \le \left\|(\nabla u_{\eps}-\nabla u)\cdot\f{y} {|y|}\right\|_{L^{2}_{t,x}} \\&+ \int_{0}^{1}\int_{(0,T)\times\Td} |\nabla u_{\eps}(t,x-\eps s y) - \nabla u_{\eps}(t,x) |^2\diff t\diff x\diff s
\end{align*}

With the assumptions of the lemma, we deduce that the first term converges to 0 uniformly with respect to $y$. Concerning the second term, we use the Fréchet-Kolmogorov theorem to deduce that the translation converge to 0 uniformly: there exists a modulus of continuity $\gamma:[0,\infty) \to [0,\infty)$ such that $\lim_{r\to 0} \gamma(r) = 0$ and
$$
\int_{(0,T)\times\Td} |\nabla u_{\eps}(t,x-\eps s y) - \nabla u_{\eps}(t,x) |^2\diff x\diff t \leq \gamma(\eps\, s \, |y|) \leq \gamma(\eps).
$$
This proves the result. 
\end{proof}

\section{Bourgain-Brézis-Mironescu and Ponce compactness result}
We consider a sequence of radial functions $\{\rho_{\eps}\}_{\eps}$ such that $\rho_{\eps} \geq 0$, $\int_{\R^d} \rho_{\eps} \diff x = 1$ and 
$$
\lim_{\eps \to 0} \int_{|x| > \delta} \rho_{\eps}(x) \diff x = 0 \mbox{ for all } \delta > 0.
$$
For the formulation of the compactness result, we use another sequence $\{\varphi_{\delta}\}_{\delta \in (0,1)} \subset C_{c}^{\infty}(\R^{d})$ of standard mollifiers with mass~1 such that $\varphi_{\delta}(x)=\frac{1}{\delta^{d}}\varphi(\frac{x}{\delta})$ with $\varphi$ of mass 1 and compactly supported.

\begin{theorem}\label{thm:ponce_tx}
Let $d \geq 2$. Let $\{f_\eps\}$ be a sequence bounded in $L^2((0,T)\times \Td)$. Suppose that there exists a sequence $\{\rho_{\eps}\}$ as above such that
\begin{equation}\label{eq:Ponce_org_condition}
\int_0^T \int_{\Td} \int_{\Td} \frac{|f_\eps(t,x) - f_\eps(t,y)|^2}{|x-y|^2} \rho_\eps(|x-y|) \diff x \diff y \diff t \leq C
\end{equation}
for some constant $C$. Then, $\{f_\eps\}$ is compact in space in $L^2((0,T)\times \Td)$, i.e.
\begin{equation}\label{eq:equicontinuity_Lp_space}
\lim_{\delta \to 0} \limsup_{\varepsilon \to 0} \int_0^T \int_{\Td} |f_\eps \ast \varphi_{\delta}(t,x) - f_\eps(t,x)|^2 \diff x \diff t = 0.
\end{equation}
\end{theorem}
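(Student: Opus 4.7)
The plan is to adapt the classical Bourgain--Brézis--Mironescu and Ponce argument to this space--time setting. Working via Fourier series on $\Td$ in the space variable (for each fixed $t$), both the convolution difference and the nonlocal Dirichlet form become Fourier multiplier expressions, which I compare frequency by frequency.

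First, by Plancherel's theorem in $x$,
$$
\int_{\Td}|f_\eps \ast \varphi_\delta(t,x) - f_\eps(t,x)|^2 \diff x = \sum_{k \in \Z^d} |\widehat{f_\eps}(t,k)|^2\, |\widehat{\varphi}(\delta k) - 1|^2,
$$
while, for $\eps$ so small that $\supp \rho_\eps$ is contained in a fundamental cell of the torus, a direct computation yields
$$
\int_{\Td\times\Td}\frac{|f_\eps(t,x)-f_\eps(t,y)|^2}{|x-y|^2}\rho_\eps(|x-y|)\diff x\diff y = 2 \sum_{k \in \Z^d} |\widehat{f_\eps}(t,k)|^2 \, A_\eps(k),
$$
where $A_\eps(k) := \int_{\R^d}\frac{1-\cos(2\pi k\cdot z)}{|z|^2}\rho_\eps(|z|)\diff z$. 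The task reduces to bounding $|\widehat{\varphi}(\delta k)-1|^2$ by $A_\eps(k)$ up to a factor that vanishes as $\delta\to 0$ uniformly in small $\eps$.

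Next I would establish two properties of $A_\eps(k)$. A Taylor expansion of the cosine together with the radial symmetry of $\rho_\eps$ and $\int \rho_\eps = 1$ shows $A_\eps(k) \to \tfrac{2\pi^2}{d}|k|^2$ as $\eps\to 0$ for each fixed $k\neq 0$, so that for every $R>0$ there exists $\eps_R>0$ with $A_\eps(k) \geq c|k|^2$ for $1\leq |k|\leq R$ and $\eps\leq \eps_R$. Moreover, exploiting the concentration of $\rho_\eps$ (using, e.g., $\int_{|z|\leq \eta}\rho_\eps/|z|^2 \diff z \geq \eta^{-2}\int_{|z|\leq \eta}\rho_\eps\,\diff z$ combined with Riemann--Lebesgue-type averaging of the cosine on the sphere), one obtains the high-frequency bound $\inf_{\eps\leq\eps_R,\,|k|\geq R} A_\eps(k) \to +\infty$ as $R\to\infty$.

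Combining these with the elementary estimate $|\widehat{\varphi}(\delta k)-1|^2 \leq C\min(\delta^2|k|^2, 1)$, valid for any smooth, even, mass-one mollifier $\varphi$, I split the Fourier sum at $|k| = R$:
$$
\sum_k |\widehat{f_\eps}(t,k)|^2\,|\widehat{\varphi}(\delta k)-1|^2 \leq C\delta^2 \sum_{1\leq |k|\leq R} |k|^2\, |\widehat{f_\eps}(t,k)|^2 + C \sum_{|k|>R}|\widehat{f_\eps}(t,k)|^2.
$$
Using the first property of $A_\eps$, the first sum is at most $C\delta^2 c^{-1}$ times the Fourier form of the hypothesis~\eqref{eq:Ponce_org_condition}; using the second property, the second sum is at most $(\inf_{|k|>R}A_\eps)^{-1}$ times the same. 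Integrating in $t\in(0,T)$, taking $\limsup_{\eps\to 0}$ (so eventually $\eps\leq \eps_R$) and then $R=R(\delta)\to\infty$ as $\delta\to 0$, both contributions vanish, giving~\eqref{eq:equicontinuity_Lp_space}.

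The principal obstacle is the uniform high-frequency lower bound on $A_\eps(k)$: since $\{\rho_\eps\}$ is only assumed to be a concentrating sequence of probability densities without prescribed scaling, one must handle the competing ranges of $|k|$ relative to the concentration scale of $\rho_\eps$ simultaneously. This is the heart of the Ponce compactness theorem and is the one step that genuinely uses the full strength of the concentration assumption.
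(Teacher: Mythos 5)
Your argument is essentially correct, but note that the paper does not prove Theorem~\ref{thm:ponce_tx} at all: it is quoted as a known result of Bourgain--Brezis--Mironescu and Ponce (see \cite{bourgain2001another, MR2041005}), with only the rescaling adaptation of Remark~\ref{rem:adapt_ponce} supplied. What you have written is a self-contained reconstruction of the classical $L^2$ Fourier-multiplier proof on the torus, with the time variable handled by performing the frequency-by-frequency comparison for each fixed $t$ and integrating; this is the natural route for $p=2$ and is sound. Two points deserve to be made explicit. First, the high-frequency lower bound on $A_\eps(k)$ hinges on the inequality
\begin{equation*}
\frac{1}{|S^{d-1}|}\int_{S^{d-1}} \bigl(1-\cos(s\,\omega\cdot e)\bigr)\diff S(\omega) \;\geq\; c_d \min(1,s^2),
\end{equation*}
which is valid only for $d\geq 2$ (for $d=1$ the spherical average is $1-\cos s$, which vanishes periodically); this is precisely where the hypothesis $d\geq 2$ enters, and combined with $\min(|z|^{-2},|k|^2)\geq \delta^{-2}$ on $|z|\leq\delta$ for $|k|\geq \delta^{-1}$ and the concentration $\int_{|z|>\delta}\rho_\eps\to 0$, it yields $\inf_{|k|\geq R,\,\eps\leq\eps_R} A_\eps(k)\to\infty$, as you assert. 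Second, the family $\{\rho_\eps\}$ is only assumed to concentrate, not to be compactly supported, so your identification of the double integral over $\Td\times\Td$ with the Fourier form requires first discarding the mass of $\rho_\eps$ outside a fixed neighbourhood of the origin (which only decreases the left-hand side of \eqref{eq:Ponce_org_condition} and changes the normalization by a factor tending to $1$). With these two clarifications, and observing that the constant $c$ in $A_\eps(k)\geq c|k|^2$ for $1\leq|k|\leq R$ can be taken uniform in $R$ (e.g.\ $c=\pi^2/d$), your splitting of the Fourier sum at $|k|=R(\delta)$ closes the argument.
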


\begin{remark}\label{rem:adapt_ponce}
Let $\omega:\R^d \to \R$ be a function as in~\eqref{eq:omega}-\eqref{as:omega}. Consider $\omega_{\eps} = \frac{1}{\eps^d} \omega\left(\frac{x}{\eps}\right)$. Suppose that
$$
\int_0^T \int_{\Td} \int_{\Td} \frac{|f_\eps(x) - f_\eps(y)|^2}{\eps^{2-\alpha}|y|^{\alpha}} \omega_\eps(|x-y|) \diff x \diff y \diff t \leq \widetilde{C}.
$$
Then, \eqref{eq:Ponce_org_condition} is satisfied. Indeed, we consider
\begin{equation}\label{eq:new_rescaled_kernel}
\rho_{\eps}(x) = \frac{\omega_\eps(|x|)\, |y|^{2-\alpha}}{\eps^{2-\alpha} \, \int_{\R^d} \omega(y) |y|^{2-\alpha} \diff y}
\end{equation}
so that \eqref{eq:Ponce_org_condition} holds true with $\frac{\widetilde{C}}{\int_{\R^d} \omega(y) |y|^{2-\alpha} \diff y}$.
\end{remark}

\section{Nonlocal Poincaré inequalities}

Let $\omega:\R^d \to \R$ be a smooth function, supported in the unit ball such that $\int_{\R^d} \omega(x) \diff x = 1$. Consider $\omega_{\eps} = \frac{1}{\eps^d} \omega\left(\frac{x}{\eps}\right)$.

\begin{lemma}\label{lem:Poincare_with_average}
For all $\alpha \in [0,2]$, there exists $C_{p}$ and $\varepsilon_0^A$ such that 
\begin{equation*}
\int_{\Td}|f-(f)_{\Td}|^{2}\le \f{1}{4C_{p}}\int_{\Td} \int_{\Td} \frac{|f(t,x) - f(t,y)|^2}{\eps^{2-\alpha}|x-y|^{\alpha}} \omega_\eps(|x-y|) \diff x \diff y     
\end{equation*}
for every $f\in L^{2}(\Td)$ and $\eps\le\eps_{0}^A$. 
\end{lemma}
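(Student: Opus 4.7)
I would argue by contradiction using the Bourgain-Brezis-Mironescu compactness and lower-semicontinuity machinery that the paper already invokes in Theorem~\ref{thm:ponce_tx} and Remark~\ref{rem:adapt_ponce}. Suppose the inequality fails. Then there exist sequences $\eps_n \downarrow 0$ and $f_n \in L^2(\Td)$ with
$$
(f_n)_{\Td} = 0, \qquad \|f_n\|_{L^2(\Td)}^2 = 1, \qquad \int_{\Td}\int_{\Td}\frac{|f_n(x)-f_n(y)|^2}{\eps_n^{2-\alpha}|x-y|^{\alpha}}\omega_{\eps_n}(|x-y|)\diff x\diff y \le \frac{1}{n}.
$$

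Next I would rewrite the double integral in the BBM form as in Remark~\ref{rem:adapt_ponce}: setting
$$
\rho_{\eps}(x) = \frac{\omega_{\eps}(|x|)\,|x|^{2-\alpha}}{\eps^{2-\alpha}\int_{\R^d}\omega(y)|y|^{2-\alpha}\diff y},
$$
the family $\{\rho_{\eps}\}$ is a valid sequence of radial mollifiers (nonnegative, mass one, concentrating at the origin), and
$$
\int_{\Td}\int_{\Td}\frac{|f_n(x)-f_n(y)|^2}{|x-y|^2}\rho_{\eps_n}(|x-y|)\diff x\diff y \le \frac{C_{\omega,\alpha}}{n},
$$
where $C_{\omega,\alpha} = \int_{\R^d}\omega(y)|y|^{2-\alpha}\diff y < \infty$ since $\alpha \in [0,2]$ and $\omega$ is compactly supported. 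Since $\{f_n\}$ is bounded in $L^2(\Td)$ and the above BBM-type seminorm is uniformly bounded, by the space-only version of Theorem~\ref{thm:ponce_tx} (applied with a trivial time variable, or directly by Ponce's compactness theorem on $\Td$), the sequence $\{f_n\}$ is relatively compact in $L^2(\Td)$. Extracting a subsequence, $f_n \to f$ strongly in $L^2(\Td)$, hence $(f)_{\Td} = 0$ and $\|f\|_{L^2(\Td)}^2 = 1$.

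To close the contradiction, I would apply the Bourgain-Brezis-Mironescu lower-semicontinuity result: if the BBM seminorm associated with $\rho_{\eps_n}$ tends to zero along $f_n \to f$ in $L^2$, then $f \in H^1(\Td)$ with
$$
K_d\,\|\nabla f\|_{L^2(\Td)}^2 \le \liminf_{n\to\infty}\int_{\Td}\int_{\Td}\frac{|f_n(x)-f_n(y)|^2}{|x-y|^2}\rho_{\eps_n}(|x-y|)\diff x\diff y = 0,
$$
for a positive dimensional constant $K_d$. Hence $\nabla f = 0$, so $f$ is constant on the torus, and the constraint $(f)_{\Td} = 0$ forces $f \equiv 0$, contradicting $\|f\|_{L^2} = 1$. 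This yields the desired uniform constants $C_p$ and $\eps_0^A$.

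The main obstacle I anticipate is not the overall structure (which is a standard contradiction argument) but verifying that the rescaled family $\{\rho_{\eps}\}$ genuinely meets the hypotheses of both the compactness and the lower-semicontinuity versions of the BBM theorem uniformly in $\alpha \in [0,2]$. The endpoints deserve a check: for $\alpha = 2$ the weight $|x|^{2-\alpha}$ is just $1$ and $\rho_{\eps}$ reduces to a rescaling of $\omega$; for $\alpha = 0$ the factor $|x|^{2}$ still keeps the mass finite because $\omega$ is compactly supported. In both cases the concentration property $\int_{|x|>\delta}\rho_{\eps}(x)\diff x \to 0$ follows from the scaling $\omega_{\eps}(x) = \eps^{-d}\omega(x/\eps)$, so the BBM hypotheses are satisfied uniformly and the argument goes through.
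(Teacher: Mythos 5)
Your proof is correct and is essentially the argument behind the result the paper invokes: the paper proves Lemma~\ref{lem:Poincare_with_average} by citing Ponce's Theorem~1.1, whose proof is precisely this normalize-and-contradict scheme combining BBM/Ponce compactness (the rescaling to $\rho_{\eps}$ being exactly Remark~\ref{rem:adapt_ponce}) with lower semicontinuity of the rescaled seminorm to force the limit to be a constant of zero mean. The only blemish is immaterial: your constant $C_{\omega,\alpha}$ should be $\left(\int_{\R^d}\omega(y)|y|^{2-\alpha}\diff y\right)^{-1}$ rather than the integral itself, and either way it is a finite positive constant so the bound is still $O(1/n)$.
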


For the proof, we refer to Ponce~\cite[Theorem 1.1]{MR2041005}. We also have an opposite inequality from \cite[Theorem 1]{bourgain2001another}:

\begin{lemma}\label{lem:inv_poincare_ineq} For all $\alpha \in [0,2]$, there exists a constant $C = C(\Td, \alpha)$ such that for all $f \in H^1(\Td)$
$$
\int_{\Td} \int_{\Td} \frac{|f(x)- f(y)|^2}{\eps^{2-\alpha}|x-y|^{\alpha}} \omega_{\varepsilon}(x-y) \diff x \diff y \leq C \, \|f\|_{H^1(\Td)}^2.
$$
\end{lemma}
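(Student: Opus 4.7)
\textbf{Proof plan for Lemma \ref{lem:inv_poincare_ineq}.} The plan is to reduce the double integral to a one-variable difference quotient via the change of variables $y = x - \eps z$, then bound the resulting difference by an integrated gradient using the fundamental theorem of calculus and Jensen's inequality, and finally exploit translation invariance on $\Td$.

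First, by density of $C^{\infty}(\Td)$ in $H^1(\Td)$ and the fact that both sides of the claimed inequality are continuous with respect to the $H^1$ norm (the left-hand side by Fatou's lemma applied after a smoothing step), it suffices to prove the estimate for $f \in C^{\infty}(\Td)$. Next, I would perform the change of variables $y = x - \eps z$ in the inner integral. Since $|x-y| = \eps|z|$, $\omega_{\eps}(x-y) = \eps^{-d}\omega(z)$, and $\diff y = \eps^d \diff z$, the powers of $\eps$ collapse and the double integral becomes
\begin{equation*}
\int_{\Td} \int_{\R^d} \frac{|f(x) - f(x - \eps z)|^2}{\eps^2 \, |z|^{\alpha}} \, \omega(z) \diff z \diff x,
\end{equation*}
where the inner integral is effectively over $\supp \omega \subset B(0,1)$.

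Second, for smooth $f$ I write $f(x) - f(x - \eps z) = \eps \int_0^1 \nabla f(x - s\eps z) \cdot z \diff s$, and apply Jensen's (equivalently Cauchy--Schwarz) to obtain
\begin{equation*}
|f(x) - f(x - \eps z)|^2 \le \eps^2 |z|^2 \int_0^1 |\nabla f(x - s\eps z)|^2 \diff s.
\end{equation*}
Substituting this bound into the integral cancels the factor $\eps^2$ and produces the weight $|z|^{2 - \alpha} \omega(z)$ in the $z$-variable. By Fubini and the translation invariance of Lebesgue measure on $\Td$, for each fixed $s, z$
\begin{equation*}
\int_{\Td} |\nabla f(x - s\eps z)|^2 \diff x = \|\nabla f\|_{L^2(\Td)}^2,
\end{equation*}
so the $s$ and $x$ integrations can be carried out trivially, leaving
\begin{equation*}
\int_{\Td}\int_{\Td} \frac{|f(x)-f(y)|^2}{\eps^{2-\alpha}|x-y|^{\alpha}} \omega_{\eps}(x-y) \diff x \diff y \le \left(\int_{\R^d} |z|^{2-\alpha}\, \omega(z) \diff z \right) \|\nabla f\|_{L^2(\Td)}^2 \le C \, \|f\|_{H^1(\Td)}^2.
\end{equation*}

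Finally, the only point requiring a brief justification is the finiteness of $\int_{\R^d} |z|^{2-\alpha} \omega(z) \diff z$: since $\omega$ is smooth and compactly supported in the unit ball and the exponent $2-\alpha \ge 0$ under the hypothesis $\alpha \in [0,2]$, the weight $|z|^{2-\alpha}$ is bounded on $\supp \omega$ and the integral is a finite constant $C = C(\Td, \alpha)$. I do not foresee a serious obstacle: the only mild subtlety is the initial smoothing, where one must ensure the nonlocal double integral is lower semicontinuous in $L^2$ (handled by Fatou applied to a subsequence converging pointwise a.e.\ after $C^{\infty}$ approximation in $H^1$). The restriction $\alpha \leq 2$ is essential and appears naturally as the condition that $|z|^{2-\alpha}$ stays bounded on the support of $\omega$.
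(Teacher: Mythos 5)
Your proof is correct and complete. The paper itself does not write out an argument for this lemma --- it simply cites \cite[Theorem 1]{bourgain2001another} --- and your computation (change of variables $y = x-\eps z$, the identity $f(x)-f(x-\eps z) = \eps\int_0^1 \nabla f(x-s\eps z)\cdot z \diff s$, Jensen, Fubini, and translation invariance on $\Td$) is precisely the standard elementary proof of that cited estimate, with the finiteness of $\int |z|^{2-\alpha}\omega(z)\diff z$ correctly identified as the only place where $\alpha\le 2$ and the compact support of $\omega$ enter. The density/Fatou step is a minor technicality handled adequately.
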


Finally, we formulate a variant of Lemma \ref{lem:Poincare_with_average} which does not require an average on the left-hand side. The proof can be adapted from~\cite[Lemma C.3]{MR4574535}.

\begin{lemma}\label{lem:poincare_nonlocal_H1_L2}
For each $\gamma \in (0,1)$ there exists ${\varepsilon}_0^B$ and constant $C(\gamma)$ such that for all $\varepsilon \in (0, {\varepsilon}_0^B)$ and all $f \in H^1(\Td)$ we have
$$
\| f\|^2_{H^1(\Td)} \leq \gamma \int_{\Td} \int_{\Td}  \frac{|\nabla f(x) - \nabla f(y)|^2}{\eps^{2-\alpha}|x-y|^\alpha} \omega_\eps(|x-y|) \diff x \diff y + C(\gamma) \|f \|^2_{L^2(\Td)}.
$$
\end{lemma}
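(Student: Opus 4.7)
The plan is to Fourier-diagonalise the nonlocal form on the torus and reduce the inequality to a pointwise estimate on the symbol. Writing $f=\sum_{k\in\Z^{d}}\hat f(k)\,e^{2\pi i k\cdot x}$ and recognising the kernel $\omega_{\eps}(|x-y|)/(\eps^{2-\alpha}|x-y|^{\alpha})$ as $J_{\eps}(x-y)$ from~\eqref{eq:conv_kernel}, the identity~\ref{propS_nonneg} of Lemma~\ref{lem:S_properties}, polarised and applied componentwise to $g=\nabla f$, gives
$$
I_{\eps}[\nabla f]:=\int_{\Td}\!\!\int_{\Td}\frac{|\nabla f(x)-\nabla f(y)|^{2}}{\eps^{2-\alpha}|x-y|^{\alpha}}\,\omega_{\eps}(|x-y|)\,\diff x\,\diff y \;=\; 2\,(2\pi)^{2}\sum_{k}|k|^{2}\,a_{\eps}(k)\,|\hat f(k)|^{2},
$$
where $a_{\eps}(k):=(J_{\eps}\ast 1)-\hat J_{\eps}(k)$ is the Fourier multiplier of $B_{\eps}$. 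Since the $H^{1}$ and $L^{2}$ norms are likewise diagonal, it suffices to verify, for $\eps$ small depending on $\gamma$, the pointwise comparison $1+|k|^{2}\leq 2\gamma\,|k|^{2}\,a_{\eps}(k)+C(\gamma)$ on $k\in\Z^{d}$.

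The technical core is a uniform lower bound on $a_{\eps}(k)$. Rescaling $x=\eps y$ in the definition of $a_{\eps}(k)$, one obtains $a_{\eps}(k)=\frac{2}{\eps^{2}}\,b(\pi\eps k)$ with
$$
b(\xi):=\int_{\R^{d}}\frac{\omega(y)\,\sin^{2}(\xi\cdot y)}{|y|^{\alpha}}\,\diff y,
$$
and the plan is to show $b(\xi)\geq c_{*}\min(|\xi|^{2},1)$ for a constant $c_{*}>0$ independent of $\xi$. For $|\xi|$ small, the inequality $\sin\theta\geq 2\theta/\pi$ on $[0,\pi/2]$ together with the normalisation~\eqref{as:omega} yields $b(\xi)\geq c_{0}|\xi|^{2}$. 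For $|\xi|\to\infty$, the Riemann–Lebesgue lemma (applicable since $\alpha<d$ makes $\omega(y)/|y|^{\alpha}\in L^{1}$) gives $b(\xi)\to\tfrac{1}{2}\int\omega(y)/|y|^{\alpha}\,\diff y>0$; on intermediate $\xi$, the function $b$ is continuous and strictly positive, because $\omega$ has support of positive measure while $\{\xi\cdot y\in \pi\Z\}$ has measure zero. Splicing these three regimes delivers the claimed bound, whence $a_{\eps}(k)\geq 2c_{*}\min(|k|^{2},1/\eps^{2})$.

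With the lower bound in hand the conclusion is short. Given $\gamma\in(0,1)$, choose $R(\gamma)\geq 1$ with $2c_{*}R(\gamma)^{2}\geq 1/\gamma$ and set $\eps_{0}^{B}:=\min\bigl(1/R(\gamma),\sqrt{2c_{*}\gamma}\bigr)$. For every $\eps<\eps_{0}^{B}$ and every $|k|\geq R(\gamma)$, splitting according to $|k|\leq 1/\eps$ or $|k|>1/\eps$ and using the corresponding arm of the lower bound on $a_{\eps}(k)$, one gets $a_{\eps}(k)\geq 1/\gamma$, hence $2\gamma|k|^{2}a_{\eps}(k)\geq 2|k|^{2}\geq 1+|k|^{2}$ (using $|k|\geq 1$ on the integer lattice). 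The finitely many remaining modes $|k|<R(\gamma)$ contribute at most $(1+R(\gamma)^{2})\|f\|_{L^{2}}^{2}$, so summing both ranges through Parseval yields the stated estimate with $C(\gamma)=1+R(\gamma)^{2}$. The genuinely non-routine step is the uniform lower bound on $b(\xi)$, in particular ruling out any "zero" of the symbol on intermediate frequencies and controlling the high-frequency tail by Riemann–Lebesgue; the rest is mechanical book-keeping.
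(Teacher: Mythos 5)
Your proof is correct, but it takes a genuinely different route from the paper, which does not actually prove Lemma~\ref{lem:poincare_nonlocal_H1_L2} in the text: it refers to an adaptation of \cite[Lemma C.3]{MR4574535}, where the inequality is obtained non-constructively via the nonlocal compactness machinery of Bourgain--Brezis--Mironescu and Ponce (a contradiction/normalization argument in the spirit of Theorem~\ref{thm:ponce_tx}), combined with the nonlocal Poincar\'e inequality. You instead diagonalise everything in Fourier on the torus, reduce to the symbol inequality $1+|k|^2\le 2\gamma|k|^2a_\eps(k)+C(\gamma)$, and prove the uniform lower bound $b(\xi)\ge c_*\min(|\xi|^2,1)$ by splicing the quadratic behaviour near $\xi=0$ (via $\sin\theta\ge 2\theta/\pi$ and the normalisation \eqref{as:omega}), positivity on compacta (the zero set $\{\xi\cdot y\in\pi\Z\}$ is null), and Riemann--Lebesgue at infinity. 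All the steps check out: the identity $I_\eps[\nabla f]=2(2\pi)^2\sum_k|k|^2a_\eps(k)|\hat f(k)|^2$ is the polarised form of \ref{propS_nonneg}, the rescaling $a_\eps(k)=\tfrac{2}{\eps^2}b(\pi\eps k)$ is exact, and the final case split on $|k|\lessgtr 1/\eps$ is sound (note only that the reduction to the symbol inequality should carry the $(2\pi)^2$ factors consistently on both sides, which changes nothing). What your approach buys is explicit, quantitative constants $\eps_0^B$ and $C(\gamma)=1+R(\gamma)^2$ and a fully self-contained argument; what it costs is that it is tied to the translation-invariant torus setting, whereas the compactness route of \cite{MR4574535} extends to bounded domains and does not require identifying the multiplier of $B_\eps$. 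Either proof is acceptable here.
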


\bibliographystyle{abbrv}
\bibliography{fastlimit}

\end{document}